\documentclass[11pt]{article}
\usepackage[margin=1in]{geometry} 
\geometry{letterpaper}

\usepackage{amssymb,amsfonts,amsmath,bbm,mathrsfs,stmaryrd,mathtools}
\usepackage{xcolor}
\usepackage{url}

%%% for long equal signs: \xlongequal[below]{above}
%%% ref: https://tex.stackexchange.com/a/5157/193209
\usepackage{extarrows}

\usepackage{enumerate}

\usepackage[colorlinks,
linkcolor=black!75!red,
citecolor=blue,
pdftitle={},
pdfproducer={pdfLaTeX},
pdfpagemode=None,
bookmarksopen=true,
bookmarksnumbered=true,
backref=page]{hyperref}

\usepackage{tikz}
%\usetikzlibrary{arrows,decorations.pathreplacing,decorations.markings,shapes.geometric,through,fit,shapes.symbols,positioning}
\usetikzlibrary{arrows,calc,decorations.pathreplacing,decorations.markings,intersections,shapes.geometric,through,fit,shapes.symbols,positioning,decorations.pathmorphing}

\usepackage{braket}

%%% Theorems and references %%%
\usepackage[amsmath,thmmarks,hyperref]{ntheorem}
\usepackage{cleveref}

\creflabelformat{enumi}{#2(#1)#3}

\crefname{section}{Section}{Sections}
\crefformat{section}{#2Section~#1#3} 
\Crefformat{section}{#2Section~#1#3} 

\crefname{subsection}{\S}{\S\S}
\AtBeginDocument{%
  \crefformat{subsection}{#2\S#1#3}%
  \Crefformat{subsection}{#2\S#1#3}% 
}

\crefname{subsubsection}{\S}{\S\S}
\AtBeginDocument{%
  \crefformat{subsubsection}{#2\S#1#3}%
  \Crefformat{subsubsection}{#2\S#1#3}% 
}

% \crefname{subsection}{\S}{\S\S}
% \crefformat{subsection}{#2\S#1#3} 
% \Crefformat{subsection}{#2\S#1#3} 
%

\theoremstyle{plain}

\newtheorem{lemma}{Lemma}[section]
\newtheorem{proposition}[lemma]{Proposition}
\newtheorem{corollary}[lemma]{Corollary}
\newtheorem{theorem}[lemma]{Theorem}

% unnumbered
% \theoremstyle{nonumberplain}

% labeled w/ letters
\theoremstyle{plain}
\theoremnumbering{Alph}
\newtheorem{theoremN}{Theorem}

\theoremstyle{plain}
\theorembodyfont{\upshape}
\theoremsymbol{\ensuremath{\blacklozenge}}

\newtheorem{definition}[lemma]{Definition}
\newtheorem{example}[lemma]{Example}

\newtheorem{remark}[lemma]{Remark}

\crefname{definition}{definition}{definitions}
\crefformat{definition}{#2definition~#1#3} 
\Crefformat{definition}{#2Definition~#1#3} 

\crefname{ex}{example}{examples}
\crefformat{example}{#2example~#1#3} 
\Crefformat{example}{#2Example~#1#3} 

\crefname{exs}{example}{examples}
\crefformat{examples}{#2example~#1#3} 
\Crefformat{examples}{#2Example~#1#3} 

\crefname{remark}{remark}{remarks}
\crefformat{remark}{#2remark~#1#3} 
\Crefformat{remark}{#2Remark~#1#3} 

\crefname{remarks}{remark}{remarks}
\crefformat{remarks}{#2remark~#1#3} 
\Crefformat{remarks}{#2Remark~#1#3} 

\crefname{convention}{convention}{conventions}
\crefformat{convention}{#2convention~#1#3} 
\Crefformat{convention}{#2Convention~#1#3} 

\crefname{notation}{notation}{notations}
\crefformat{notation}{#2notation~#1#3} 
\Crefformat{notation}{#2Notation~#1#3} 

\crefname{table}{table}{tables}
\crefformat{table}{#2table~#1#3} 
\Crefformat{table}{#2Table~#1#3}

\crefname{lemma}{lemma}{lemmas}
\crefformat{lemma}{#2lemma~#1#3} 
\Crefformat{lemma}{#2Lemma~#1#3} 

\crefname{proposition}{proposition}{propositions}
\crefformat{proposition}{#2proposition~#1#3} 
\Crefformat{proposition}{#2Proposition~#1#3} 

\crefname{corollary}{corollary}{corollaries}
\crefformat{corollary}{#2corollary~#1#3} 
\Crefformat{corollary}{#2Corollary~#1#3} 

\crefname{theorem}{theorem}{theorems}
\crefformat{theorem}{#2theorem~#1#3} 
\Crefformat{theorem}{#2Theorem~#1#3} 

\crefname{enumi}{}{}
\crefformat{enumi}{(#2#1#3)}
\Crefformat{enumi}{(#2#1#3)}

\crefname{assumption}{assumption}{Assumptions}
\crefformat{assumption}{#2assumption~#1#3} 
\Crefformat{assumption}{#2Assumption~#1#3} 

\crefname{construction}{construction}{Constructions}
\crefformat{construction}{#2construction~#1#3} 
\Crefformat{construction}{#2Construction~#1#3} 

\crefname{equation}{}{}
\crefformat{equation}{(#2#1#3)} 
\Crefformat{equation}{(#2#1#3)}

%----------------Numbering equations---------------------

\numberwithin{equation}{section}
\renewcommand{\theequation}{\thesection-\arabic{equation}}

\theoremstyle{nonumberplain}
\theoremsymbol{\ensuremath{\blacksquare}}

\newtheorem{proof}{Proof}
\newcommand\pf[1]{\newtheorem{#1}{Proof of \Cref{#1}}}

\newcommand\bA{{\mathbb A}}
\newcommand\bB{{\mathbb B}}
\newcommand\bC{{\mathbb C}}

\newcommand\bK{{\mathbb K}}

\newcommand\bQ{{\mathbb Q}}

\newcommand\bZ{{\mathbb Z}}

\newcommand\cM{{\mathcal M}}

\newcommand\cO{{\mathcal O}}

\newcommand\cS{{\mathcal S}}

\newcommand\fm{{\mathfrak m}}

\newcommand\fp{{\mathfrak p}}

%%%%%%%%%%%%%%%%%%% simple math operators %%%%%%%%%%%%%%%%%%%

\DeclareMathOperator{\End}{\mathrm{End}}
\DeclareMathOperator{\Hom}{\mathrm{Hom}}

\DeclareMathOperator{\lcm}{\mathrm{lcm}}

%%%%%%%%%%%%%%%%%%%%%%%%%%%%%%%%%%%%%%%%%%%%%%%%%%%%%%%%%%
%%% align* numbering %%%

\newcommand\numberthis{\addtocounter{equation}{1}\tag{\theequation}}

%%%%%%%%%%%%%%%%%%%%%%%%%%%%%%%%%%%%%%%%%%%%%%%%%%%%%%%%%%
%%% misc %%%

\newcommand{\qedhere}{\mbox{}\hfill\ensuremath{\blacksquare}}

%%%%%% \xrightarrowdbl %%%%%%
%%%%%% ref: https://tex.stackexchange.com/questions/260554/two-headed-version-of-xrightarrow

%%%%%% hooked \xrightarrow %%%%%%
% \lhook\joinrel\xrightarrow{\quad}

%%%%%

\title{High powers in endomorphism rings over Dedekind domains}
\author{Alexandru Chirvasitu}

%\subjclass[2010]{102000}

\begin{document}

\date{}

\newcommand{\Addresses}{{% additional braces for segregating \footnotesize
  \bigskip
  \footnotesize

  \textsc{Department of Mathematics, University at Buffalo}
  \par\nopagebreak
  \textsc{Buffalo, NY 14260-2900, USA}  
  \par\nopagebreak
  \textit{E-mail address}: \texttt{achirvas@buffalo.edu}

  % % \medskip
  % % 
  % % \textsc{Department of Mathematics, INSTITUTION}
  % % \par\nopagebreak
  % % \textsc{ADDRESS}
  % % \par\nopagebreak
  % % \textit{E-mail address}: \texttt{??}
  % % 

}}

\maketitle

\begin{abstract}
  Let $\mathbb{A}$ be a Dedekind domain and $T$ an endomorphism of a finitely-generated projective $\mathbb{A}$-module. If $T$ is an $s^{th}$ power in $\mathrm{End}_{\mathbb{A}}(M)$ for $s$ ranging over an infinite set $\mathcal{S}$ of positive integers, then (a) $T$ decomposes as a direct sum of the zero operator and an invertible operator on a summand of $M$ and (b) that summand is semisimple or of finite order if $\mathcal{S}$ is appropriately large (what this means depends on the structure of the additive and multiplicative groups of $\mathbb{A}$). This generalizes a result of M. Cavachi's to the effect that the only non-singular integer matrix that is an $s^{th}$ power in $M_n(\mathbb{Z})$ for all $s$ is the identity. 
\end{abstract}

\noindent {\em Key words: Dedekind domain; local field; global field; abstract curve; projective; finitely-generated; semisimple; Fitting lemma; valuation; prime ideal; supernatural number}

\vspace{.5cm}

\noindent{MSC 2020: 13F05; 11F85; 11R04; 11R27; 16U60; 11D88; 13A18; 12J20; 16W60}

%\tableofcontents

%%%%%%%%%%%%%%%%%%%%%%%%%%%%%%%%%%%%%%%%%%%%%%%%%%%%%%%%%%%%%%%%%%%%%%%%%%%%%
%%%%%%%%%%%%%%%%%%%%%%%%%%%%%%%%%%%%%%%%%%%%%%%%%%%%%%%%%%%%%%%%%%%%%%%%%%%%%
\section*{Introduction}

The original impetus for the note was provided by the remark \cite{cvch_11401} that the only non-singular integer-valued matrix that is an $n^{th}$ power of an integer matrix for every $n$ is the identity. Very short proofs exist (\cite[e.g. pp.934-935]{cvch_11401-sol}), but the problem suggests numerous follow-up questions:

\begin{enumerate}[(a)]
\item\label{item:a} Is it enough to assume the matrix is an $n^{th}$ power for just infinitely many $n$? {\it (no: $-1$ is a power with arbitrary odd exponent)};

\item How about an $n^{th}$ power for all but finitely many $n$? {\it (yes; most proofs generalize in this fashion)};

\item Assuming only infinitely many exponents, and taking a cue from \Cref{item:a} above, does it follow that the matrix is of finite order in the general linear group? {\it (yes; a consequence of \Cref{th:gla})};

\item If so, how does the order relate to the exponents in question? {\it (coprime to those primes dividing the exponents with arbitrarily high powers: \Cref{th:gla} reformulates this in terms of {\it supernatural numbers})};
  
\item What can one say if the matrix is singular? {\it (almost as much: it is diagonalizable over $\bZ$ to $\mathrm{diag}(0\cdots 0,\ 1\cdots 1)$; a consequence of \Cref{th:gla} again, but see also \cite{math.se_11401-gen} for idempotence)}. 
\end{enumerate}

More generally (and vaguely), it is tempting to abstract some of the arithmetic driving the phenomena above away from the specifics of the situation. To that end, the discussion below substitutes a Dedekind domain $\bA$ for the integers and an endomorphism $T$ of a finitely-generated projective $\bA$-module for the matrix. The main result (\Cref{th:gla}) disentangles several threads that appear entwined in the original problem:

\begin{theoremN}
  Let $\bA$ be a Dedekind domain and $T\in \End_{\bA}(M)$ an endomorphism of a finitely-generated projective $\bA$-module $M$.
  
  \begin{enumerate}[(1)]

  \item\label{item:splitN} If an endomorphism $T$ is an $s^{th}$ power in $\End_{\bA}(M)$ for arbitrarily large $s\in\bZ_{>0}$, then $T$ is the direct sum of the zero operator and an invertible operator on a summand of $M$. 
    
  \item\label{item:addN} Consider an infinite set $\cS$ of positive integers such that
    \begin{align*}
      \mathrm{char}(\bA)=0
      &\quad\xRightarrow{}\quad
        \text{the group $(A,+)$ has no non-trivial elements divisible by every $s\in \cS$;}\\
      \mathrm{char}(\bA)=p>0
      &\quad\xRightarrow{}\quad
        \left\{n\ |\ p^n\text{ divides some }s\in\cS\right\}
        \text{ is unbounded}.\numberthis\label{eq:unbddexp}
    \end{align*}
    If $T=T_s^s$, $T_s\in\End_{\bA}(M)$ for every $s\in \cS$, then the invertible summand of the preceding point is semisimple.

  \item\label{item:multN} Consequently, if the only elements of the multiplicative group $\bA^{\times}$ that are $s^{th}$ powers for all $s\in \cS$ are roots of unity, said invertible summand is in fact of finite order.

    Moreover, that order is coprime to every prime $p$ satisfying the right-hand condition in \Cref{eq:unbddexp}. 
  \end{enumerate}
\end{theoremN}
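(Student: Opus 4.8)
The plan is to reason fibrewise, passing freely among $M$, its generic fibre $M_K:=M\otimes_{\bA}K$ over $K=\operatorname{Frac}\bA$, and the localizations $M_{\fp}$ at the height-one primes, exploiting throughout that an element of $\End_{\bA}(M)$ has integral characteristic polynomial — hence integral eigenvalues — and that $\bA=\bigcap_{\fp}\bA_{\fp}$. For \Cref{item:splitN}, reduce to $M$ of constant rank $n$ ($\operatorname{Spec}\bA$ is irreducible). From $\det T=(\det T_s)^{s}$ for arbitrarily large $s$ one gets that $v_{\fp}(\det T)$ is divisible by arbitrarily large integers at every $\fp$, so either $\det T\in\bA^{\times}$ (then $T$ is already invertible and we are done) or $\det T=0$. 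The same valuation estimate applied to an eigenvalue $\lambda$ of $T$ — which is integral over $\bA$ and is the $s$-th power of an eigenvalue of $T_s$ for arbitrarily large $s$ — shows every nonzero eigenvalue of $T$ lies in $\ol{\bA}^{\times}$. Hence, writing $\chi_T(x)=x^{d}g(x)$ with $g(0)\ne0$, the constant term $g(0)=\pm\prod_{\lambda\ne0}\lambda$ lies in $\bA^{\times}$, so $x^{d}$ and $g(x)$ are comaximal in $\bA[x]$; Cayley--Hamilton then splits $M=\ker(T^{d})\oplus\ker(g(T))$ $T$-stably, with $T$ invertible on the second summand (there $g(T)=0$ and $g(0)\in\bA^{\times}$) and nilpotent on the first. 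Finally, a nilpotent endomorphism of a rank-$d$ projective module that is an $s$-th power for one $s\ge d$ vanishes (its $s$-th root is nilpotent, hence has zero $d$-th power, a fortiori zero $s$-th power); so $T$ is zero on $\ker(T^{d})$, which is the asserted decomposition.

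For \Cref{item:addN}, assume $T$ invertible (by \Cref{item:splitN}); the goal is to kill the nilpotent part of $T$. In characteristic $p$ this is the easy half: choose $s\in\cS$ with $p^{n}\mid s$ and $p^{n}\ge\operatorname{rank}M$ (possible by the right side of \eqref{eq:unbddexp}); then the unipotent part $\1+N_s$ of $T_s$ satisfies $(\1+N_s)^{p^{n}}=\1+N_s^{\,p^{n}}=\1$, since the intermediate binomial coefficients vanish modulo $p$ and $N_s^{\,p^{n}}=0$; hence the unipotent part $\1+N$ of $T=T_s^{\,s}$ is $(\1+N_s)^{s}=\1$, and $T$ is semisimple. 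In characteristic $0$, suppose the nilpotent part $N\in K[T,T^{-1}]$ is nonzero, of nilpotence index $k\ge2$. For each $s\in\cS$, uniqueness of the Jordan decomposition gives $\1+N=(\1+N_s)^{s}$ for the unipotent part $\1+N_s$ of $T_s$; a binomial expansion gives $N=sN_s\,u_s$ with $u_s$ a unit of the commutative ring $K[N_s]$, whence $N^{k-1}=s^{k-1}N_s^{\,k-1}$. The task is to convert this into the divisibility of one fixed nonzero element of $(\bA,+)$ by every $s\in\cS$, which contradicts the left side of \eqref{eq:unbddexp}: read on a Jordan chain of $T$ over a discrete valuation ring extending some $\bA_{\fp}$ with $\fp\mid s$, the relation $T=T_s^{\,s}$ forces a top-power nilpotent datum of $T$ to be divisible by $s$ there; one then controls the $\fp$-denominators introduced when passing from $T$ to $T_s$ — they are governed by differences of $s$-th roots of eigenvalues of $T$, i.e.\ by roots of unity of $s$-power order, and hence are bounded independently of $s$ — so that the divisibility descends to $\bA$.

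For \Cref{item:multN}, $T$ is now semisimple and invertible, so the distinct irreducible factors $p_j$ of its minimal polynomial split $M_K$; saturating each $\ker p_j(T_K)$ inside $M$ gives a $T$-stable projective direct summand $M_j\subseteq M$ on which $T$ acts, through an order in $L_j:=K[x]/(p_j)$, as multiplication by a unit $\lambda_j\in\mathcal{O}_{L_j}^{\times}$. Because $T_s$ commutes with $T$ it is $L_j$-linear on $M_j\otimes K$, hence preserves an $\mathcal{O}_{L_j}$-lattice there, on which its determinant $\delta_s$ lies in $\mathcal{O}_{L_j}$; since $(T_s|_{M_j})^{s}=\lambda_j\cdot\id$ one gets $\delta_s^{\,s}=\lambda_j^{\,r_j}$ with $r_j=\operatorname{rk}_{L_j}(M_j\otimes K)$ and $\delta_s\in\mathcal{O}_{L_j}^{\times}$. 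Thus a fixed power of $\lambda_j$ is an $s$-th power in $\mathcal{O}_{L_j}^{\times}$ for every $s\in\cS$. It remains to note that the hypothesis — that only roots of unity are $s$-th powers for all $s\in\cS$ — ascends from $\bA^{\times}$ to the module-finite extension $\mathcal{O}_{L_j}$: the obstruction is measured by the supernatural number $\prod_{\ell}\ell^{\sup_{s\in\cS}v_{\ell}(s)}$, which is unaffected by passing to a finite extension (finitely generated unit groups modulo torsion in the global setting, $\ell$-adic divisibility of units in the local setting). Hence $\lambda_j$ is a root of unity and $T$ has finite order; the refined form of this observation — that a $p$-power root of unity cannot be an $s$-th power for all $s\in\cS$ when $p^{n}\mid s$ for unbounded $n$ — shows that order is coprime to every such $p$.

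The main obstacle is the characteristic-zero half of \Cref{item:addN}: the Jordan parts of $T_s$ need not be integral over $\bA$ — they can acquire denominators precisely at the primes dividing $s$ — so one cannot simply clear a fixed denominator, and must instead localize at each $\fp\mid s$ and prove that the denominator introduced there is bounded uniformly in $s$. This is exactly the point at which the precise shape of the additive hypothesis in \eqref{eq:unbddexp} (rather than, say, ``$s$-th power for all but finitely many $s$'') is forced; everything else is valuation bookkeeping together with the ascension lemma used for \Cref{item:multN}.
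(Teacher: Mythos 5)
Your part (1) and the characteristic-$p$ half of part (2) are correct and take a genuinely different route from the paper: the paper splits $M$ via a Fitting-type lemma together with the observation that the principal ideal $(\det T)$ is an arbitrarily high power in the free abelian group of fractional ideals, whereas you split via Cayley--Hamilton and comaximality of $x^d$ and $g(x)$ once $g(0)\in\bA^{\times}$; both work, and your char-$p$ computation $(\1+N_s)^{p^{n}}=\1+N_s^{p^{n}}=\1$ is essentially the paper's. One repairable slip in (1): the ``same valuation estimate applied to an eigenvalue $\lambda$'' does \emph{not} show $\lambda\in\ol{\bA}^{\times}$, because $v(\lambda)=s\,v(\mu_s)$ with $v(\mu_s)$ a possibly tiny positive rational in the (non-noetherian) ring $\ol{\bA}$ --- e.g.\ $p=(p^{1/s})^{s}$ with $p^{1/s}$ integral, for every $s$. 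Apply the determinant argument instead to the lowest nonzero coefficient of $\chi_T$: the nonzero eigenvalues of $T$ are the $s$-th powers of those of $T_s$, so $g(0)=\pm g_s(0)^{s}$ with $g_s(0)\in\bA$, whence $g(0)\in\bA^{\times}$ and each nonzero eigenvalue is then forced to be a unit of $\ol{\bA}$.

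The genuine gap is exactly where you locate it: the characteristic-zero half of (2). Everything after $N^{k-1}=s^{k-1}N_s^{k-1}$ --- localizing on a Jordan chain, bounding the $\fp$-denominators of $N_s$ by differences of $s$-power roots of unity, and ``descending the divisibility to $\bA$'' --- is a plan rather than a proof, and the descent is precisely the content of the theorem. Compare with \Cref{le:unip1}: after reducing to $T$ unipotent and replacing $T_s$ by its unipotent Jordan factor $U_s$, the paper argues that $U_s$ lies in $\End_{\ol{\bA}}(M_{\ol{\bA}})$ (no denominators arise, since the semisimple factor has root-of-unity eigenvalues and stays integral along with its inverse), then inducts on the nilpotence index down to $(T-\1)^2=0$, where $T_s^{s}=T$ reads literally as $sS_s=S$ with $S_s$ integral; because $\Hom_{\bA}(M',\ker(T-\1))$ is finitely generated projective and $\bA$ is integrally closed, $S/s$ lands back in that module, so $S$ is a genuine $\cS$-divisible element of a submodule of $\bA^{n}$ and vanishes by \Cref{eq:add0}. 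Your sketch would additionally require a bound on the denominators that is uniform in $s$ \emph{and} an argument that some fixed nonzero multiple of $N^{k-1}$ is then $\cS$-divisible in $\End_{\bA}(M)$ itself; neither is supplied. Separately, in (3) the ``ascension'' of the multiplicative hypothesis from $\bA^{\times}$ to $\cO_{L_j}^{\times}$ is justified in your parenthetical only for number rings and rings of integers of local fields, not for an arbitrary Dedekind domain as the statement requires; that step needs either a proof or an explicit restriction of scope.
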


This makes it clear, in particular, that

\begin{itemize}
\item the direct-sum decomposition of part \Cref{item:splitN} is a rather general phenomenon, reminiscent of {\it Fitting}-type results (e.g. \cite[(19.16)]{lam_1st});

\item the semisimplicity of item \Cref{item:addN} stems from an ``additive'' constraint on the exponents;

\item while finally, the finite-order result in \Cref{item:multN} is a byproduct of a constraint on the multiplicative group $\bA^{\times}$ of units (which group is particularly simple when $\bA=\bZ$). 
\end{itemize}

All of this specializes well to rings of integers in algebraic number fields (\Cref{ex:glob0} and \Cref{cor:nrrings,cor:like-11401}), or in positive-characteristic global fields (\Cref{cor:globp}), as well as local fields of either positive (\Cref{ex:locp}) or vanishing (\Cref{ex:loc0}) characteristic. 

%%%%%%%%%%%%%%%%%%%%%%%%%%%%%%%%%%%%%%%%%%%%%%%%%%%%%%%%%%%%%%%%%%%%%%%%%%%%%
\subsection*{Acknowledgements}

I am grateful for valuable input from M. Cavachi, R. Kanda, M. Reyes and V. Trivedi.

This work is partially supported through NSF grant DMS-2011128.

% % %%%%%%%%%%%%%%%%%%%%%%%%%%%%%%%%%%%%%%%%%%%%%%%%%%%%%%%%%%%%%%%%%%%%%%%%%%%%%
% % %%%%%%%%%%%%%%%%%%%%%%%%%%%%%%%%%%%%%%%%%%%%%%%%%%%%%%%%%%%%%%%%%%%%%%%%%%%%%
% % \section{Preliminaries}\label{se.prel}
% %

%%%%%%%%%%%%%%%%%%%%%%%%%%%%%%%%%%%%%%%%%%%%%%%%%%%%%%%%%%%%%%%%%%%%%%%%%%%%%
%%%%%%%%%%%%%%%%%%%%%%%%%%%%%%%%%%%%%%%%%%%%%%%%%%%%%%%%%%%%%%%%%%%%%%%%%%%%%
\section{Highly divisible semisimple operators}\label{se:ss}

We assume some background on {\it Dedekind domains} (noetherian integrally closed domains of Krull dimension $\le 1$ \cite[\S I.3, Definition following Proposition 4]{ser_locf}), such as the reader can find in countless sources: \cite[Chapter 9]{am_comm}, \cite[\S 16.3]{df_3e}, \cite[\S I.3]{ser_locf}, \cite[Chapter 3]{marc_nfld}, etc. \cite[\S VII.2.2, Theorem 1]{bourb_commalg} and \cite[Theorem 6.20]{lm_mult} provide extensive lists of alternative characterizations.

\begin{remark}\label{re:fieldded}
  As defined here, the class of Dedekind domains includes that of fields; sources differ on this: \cite[Chapter 9]{am_comm}, \cite[\S I.3]{ser_locf}, \cite[Chapter 3]{marc_nfld} and \cite[\S I.3, Definition 1.3]{neuk_ant} agree, since they phrase the requirement via universal quantification over non-zero prime ideals, of which fields have none. On the other hand, because \cite[sentence following Theorem 9.3]{am_comm}, \cite[\S 16.3]{df_3e} and \cite[\S I.3, Definition following Theorem 6.2A]{hrt_ag} (for instance) require that the Krull dimension be {\it exactly} 1 (rather than only $\le 1$), the resulting Dedekind domains cannot be fields.

  Nothing below hinges crucially on the matter; having to make a choice for definiteness, we count fields among Dedekind domains.
\end{remark}

Recall in particular \cite[\S 16.3, Proposition 21 and Theorem 22]{df_3e} that an $\cO$-module $M$ is
  \begin{equation}\label{eq:dedek}
    \begin{aligned}
      &\text{finitely-generated {\it projective} \cite[\S 10.5, Definition preceding Corollary 31]{df_3e}} \iff\\
      &\text{it is finitely-generated torsion-free} \iff\\
      &M\cong \bigoplus_{s=1}^r I_s\text{ for ideals }I_s\trianglelefteq \cO \iff\\
      &M\cong \cO^{r-1}\oplus (I_1\cdots I_r),
    \end{aligned}
  \end{equation}
  where the last summand is the product of the $r$ ideals. If the $I_s$ of \Cref{eq:dedek} are non-zero, $r$ is the {\it rank} \cite[\S 12.1, Definition preceding Theorem 4]{df_3e} of $M$.
  
It will be convenient to use the language of {\it supernatural numbers} (\cite[\S 22.8]{fj_field}, \cite[\S 1.3]{ser_galcoh}, etc.): formal products $\prod_{p}p^{n_p}$ over primes $p$, with exponents $n_p\in\bZ_{\ge 0}\sqcup\{\infty\}$. For these, one can make sense in the obvious fashion of products, least common multiples and greatest common divisors, and other such arithmetic notions.

The usual {\it $p$-adic valuation} $\nu_p$ \cite[Example 2.2.1 (a)]{fj_field} attached to a prime number $p$ extends to supernatural numbers in the obvious fashion:
\begin{equation*}
  \nu_p\left(\prod_p p^{n_p}\right):= n_p.
\end{equation*}

We also borrow a piece of notation/terminology from \cite[\S 10.1]{ser_linrep}: for a set $\Pi$ of primes, a (supernatural) {\it $\Pi$-number} is one whose prime divisors all belong to $\Pi$, whereas a (supernatural) {\it $\Pi'$-number} is one whose prime divisors all lie outside of $\Pi$. 

Finally, we introduce some language in line with the standard terminology on {\it divisible groups (or modules)} \cite[\S 10.5, discussion preceding Proposition 36 and Example (4) following it]{df_3e}. 

\begin{definition}\label{def:div}
  Let $x\in \cM$ be an element in a monoid written multiplicatively.
  \begin{enumerate}[(1)]
  \item $x$ is {\it $s$-divisible (in $\cM$)} for a positive integer $s$ if there is $y\in \cM$ with $y^s=x$.

  \item Similarly, for a set $\cS$ of positive integers, $x$ is {\it $\cS$-divisible (in $\cM$)} if it is $s$-divisible for every $s\in \cS$.

  \item $x$ is {\it an arbitrarily high power} or {\it arbitrarily highly divisible} if it $\cS$-divisible for some infinite set $\cS$ of positive integers. 
  \end{enumerate}
\end{definition}

\begin{theorem}\label{th:gla}
  Let $\bA$ be a Dedekind domain and $M$ a finitely-generated projective $\bA$-module. 
  
  \begin{enumerate}[(1)]

  \item\label{item:split} If $T\in \End_{\bA}(M)$ is an arbitrarily high power, then $M=\ker T\oplus \mathrm{im}~T$ and $T|_{\mathrm{im}~T}$ is invertible. 
    
  \item\label{item:add} Consider an infinite set $\cS$ of positive integers such that
    \begin{align*}
      \mathrm{char}(\bA)=0
      &\quad\xRightarrow{}\quad
        \text{the group $(A,+)$ has no non-trivial $\cS$-divisible elements;}\numberthis\label{eq:add0}\\
      \mathrm{char}(\bA)=p>0
      &\quad\xRightarrow{}\quad
        p\in \Pi_{\cS}:=\left\{\text{primes p}\ |\ \nu_p\lcm \left(s\ |\ s\in \cS\right)=\infty\right\}.\numberthis\label{eq:addp}
    \end{align*}
    If $T\in\End_{\bA}(M)$ is $\cS$-divisible in $\End_{\bA}(M)$, then the restriction $T|_{\mathrm{im}(T)}$ of \Cref{item:split} is semisimple.

  \item\label{item:mult} If in addition $(A^{\times}/\mathrm{torsion}(A^{\times}),\cdot)$ also has no non-trivial $\cS$-divisible elements then for an $\cS$-divisible $T\in\End_{\bA}(M)$ the restriction $T|_{\mathrm{im}(T)}$ is of finite $\Pi_{\cS}'$-order.
    
  \item\label{item:from} Conversely, if $T$ is a direct sum of the zero operator and an operator of finite order $d$, then $T$ is an $n^{th}$ power in $\End_{\bA}(M)$ for every $n$ coprime to $d$.
  \end{enumerate}
\end{theorem}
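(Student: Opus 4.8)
The plan is to treat \Cref{item:split,item:add,item:mult,item:from} in order, the first step reducing everything to the study of a single invertible operator on the lattice $\mathrm{im}~T$. For \Cref{item:split}, pass to the fraction field $K$ of $\bA$ and write $V=M\otimes_{\bA}K$, $T_V=T\otimes_{\bA}K$, with Fitting decomposition $V=V_0\oplus V_1$, $T_V|_{V_0}$ nilpotent, $T_V|_{V_1}$ invertible. If $T=T_s^s$ then $T_s$ commutes with $T$, hence preserves $V_0$ and $V_1$; choosing $s\ge\dim_K V_0$ (legitimate since the exponent set is infinite) forces the nilpotent operator $T_{s,V}|_{V_0}$ on a space of dimension $<s$ to satisfy $(T_{s,V}|_{V_0})^s=0$, i.e.\ $T_V|_{V_0}=0$, so $V_0=\ker T_V$. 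Put $M_1:=M\cap V_1$; it is finitely generated, torsion-free (hence projective), it contains $T^{\dim_K V_0}(M)$ and therefore spans $V_1$, and it is stable under both $T$ and $T_s$. Then $\delta:=\det_K(T|_{M_1})=\det(T_V|_{V_1})$ is a nonzero element of $\bA$ with $\delta=\det_K(T_s|_{M_1})^s$, so it is $\cS$-divisible in the multiplicative monoid $\bA\smallsetminus\{0\}$; factoring the ideal $(\delta)$, each of its prime exponents is divisible by every $s\in\cS$, hence $(\delta)=\bA$ and $T|_{M_1}$ is an automorphism of $M_1$. Finally $M_1\cap\ker T\subseteq V_1\cap V_0=0$, and for $m\in M$ invertibility of $T|_{M_1}$ yields $m_1\in M_1$ with $Tm_1=Tm$, whence $m-m_1\in\ker T$; thus $M=\ker T\oplus M_1$ and $M_1=\mathrm{im}~T$.

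By \Cref{item:split} we may now assume $M=\mathrm{im}~T$, that $\tau:=T$ is invertible, and $\tau=\tau_s^s$ with $\tau_s\in\End_{\bA}(M)$; \Cref{item:add} asks for separability of the minimal polynomial of $\tau$ over $K$. When $\mathrm{char}(\bA)=p>0$ this is short: over $\overline K$, uniqueness of multiplicative Jordan decompositions shows the unipotent part $u$ of $\tau$ equals $u_s^{\,s}$ for the unipotent part $u_s$ of $\tau_s$, and \eqref{eq:addp} furnishes $s\in\cS$ with $p^{\nu_p(s)}\ge\operatorname{rank}M$; writing $s=p^a m$ with $p\nmid m$ and using $u_s^{\,p^a}=\id+(u_s-\id)^{p^a}=\id$ in characteristic $p$ (a nilpotent of exponent $\le\operatorname{rank}M$ raised to the $p^a$-th power), one gets $u=u_s^{\,s}=\id$, so $\tau$ is semisimple. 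When $\mathrm{char}(\bA)=0$ I would argue by contradiction: if $\tau$ is not semisimple, its additive nilpotent part $N\in K[\tau]\subseteq c^{-1}\End_{\bA}(M)$ (some $c\in\bA\smallsetminus\{0\}$ depending only on $\tau$) is nonzero, and the aim is to show $cN$ is divisible by every $s\in\cS$ inside $\End_{\bA}(M)$ — then $cN$ is an $\cS$-divisible element of the additive group of the projective $\bA$-module $\End_{\bA}(M)$, which as a direct summand of a free module has no nonzero such element by \eqref{eq:add0} (applied coordinatewise), a contradiction forcing $N=0$. The divisibility is extracted by comparing $\tau=\tau_s^s=(\sigma_s+N_s)^s$ with the Jordan decomposition $\tau=\sigma+N$: uniqueness gives $N=s\,\sigma\sigma_s^{-1}N_s\cdot(\id+\textrm{nilpotent})$, and after clearing the bounded (independent of $s$) denominators coming from binomial coefficients, this exhibits $N$ as $s$ times an integral operator — modulo the difficulty, flagged below, that the Jordan components $\sigma_s,N_s$ of $\tau_s$ need not themselves be $\bA$-integral, which I expect to handle by first localizing at a maximal ideal, extending scalars to a splitting field of $\chi_\tau$, and restricting to a single cyclic Jordan block of $\tau$ (where the centralizer is spanned by powers of $\tau-\lambda$ and the bookkeeping is transparent).

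For \Cref{item:mult}: $\tau$ is now semisimple, invertible and $\cS$-divisible, and since $\tau_s^s=\tau$ is semisimple so is each $\tau_s$. Any eigenvalue $\lambda$ of $\tau$ lies in a field $F=K(\lambda)$ of degree $\le\operatorname{rank}M$ and is a unit of the ring of integers $\cO_F$; moreover $\tau_s$ preserves the $\lambda$-eigenspace $W_\lambda$ with $(\tau_s|_{W_\lambda})^s=\lambda\cdot\id$, so it has among its eigenvalues — roots of $\chi_{\tau_s}\in\bA[x]$ — an $s$-th root $\mu_s$ of $\lambda$ that is an algebraic integer and a unit of degree $\le\operatorname{rank}M$ over $K$. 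Feeding this through the norm $N_{F/K}$ (equivalently, the determinant of $\tau_s|_{W_\lambda}$), which carries $\cS$-divisibility into $\bA^{\times}$, into the hypothesis that $\bA^{\times}/\mathrm{torsion}$ has no nonzero $\cS$-divisible element forces $\lambda$ to be a root of unity; hence $\tau$, being semisimple with all eigenvalues roots of unity, has finite order $d$. That $d$ is a $\Pi_{\cS}'$-number follows because, for $p\in\Pi_{\cS}$ and $\nu_p(s)$ large, if $p\mid d$ then the $\mu_s$ above would be a root of unity of $p$-power order $\ge p^{\nu_p(s)+1}$, forcing a cyclotomic extension $K(\zeta_{p^{\nu_p(s)+1}})\subseteq K(\mu_s)$ of degree $\le\operatorname{rank}M$, which is impossible once $\nu_p(s)$ exceeds a bound depending only on $\operatorname{rank}M$, since cyclotomic degrees over $K$ grow. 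Finally, \Cref{item:from} is immediate: writing $T=0\oplus S$ on $M=M'\oplus M''$ with $S^d=\mathrm{id}_{M''}$ and $\gcd(n,d)=1$, pick $a\ge1$ with $an\equiv1\pmod d$; then $(0\oplus S^{a})^{n}=0\oplus S^{an}=0\oplus S=T$.

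The main obstacle is the characteristic-$0$ half of \Cref{item:add}: converting ``$\tau$ not semisimple'' into an honest nonzero $\cS$-divisible element of a free $\bA$-module, which requires controlling the non-integrality of the Jordan components of $\tau_s$ via the localization/scalar-extension/single-block reduction indicated above; the passage through $\chi_\tau$ and back down to $\bA$ is where I expect the real work to lie. A milder version of the same descent issue — propagating the multiplicative hypothesis on $\bA^{\times}$ to the unit groups of the fields generated by the eigenvalues of $\tau$ — reappears in \Cref{item:mult}, though in the arithmetically relevant cases (number and function rings, local fields) it is routine, resting on finiteness of unit groups modulo torsion or the Northcott property. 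Everything else (the Fitting argument in \Cref{item:split}, the characteristic-$p$ computation, and \Cref{item:from}) is elementary.
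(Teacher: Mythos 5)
Parts \Cref{item:split} and \Cref{item:from}, and the characteristic-$p$ half of \Cref{item:add}, are correct. Your \Cref{item:split} is in substance the paper's argument: the nilpotent part dies because $T_s^{\operatorname{rank}M}=0$ forces $T=T_s^s=0$ for $s\ge\operatorname{rank}M$, and invertibility on the complement follows because $(\det T)$ is an arbitrarily high power in the free abelian group of fractional ideals. Your characteristic-$p$ computation $u_s^{p^a}=\id+(u_s-\id)^{p^a}=\id$ is a clean shortcut past the induction the paper runs in \Cref{le:unip1}.

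The genuine gap is exactly where you flag it: the characteristic-$0$ half of \Cref{item:add} -- the technical heart of the theorem -- is not proved, and the sketch does not obviously close. Matching additive Jordan components in $\tau=(\sigma_s+N_s)^s$ requires $\sigma_s,N_s$ to be integral up to denominators bounded independently of $s$, and they need not be: already over $\bZ$ the semisimple part of an integral matrix can fail to be integral, with denominators governed by how the eigenvalues of $\tau_s$ separate, which varies with $s$. The proposed repair of restricting to a single cyclic Jordan block of $\tau$ is not available either, since $\tau_s$ commutes with $\tau$ and hence preserves generalized eigenspaces but not a chosen Jordan block decomposition; on a non-cyclic isotypic block the centralizer is strictly larger than $K[\tau]$. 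The paper's route (\Cref{le:unip1}) avoids denominators altogether: after adjoining the eigenvalues of $T$ to $\bA$, the \emph{multiplicative} Jordan decomposition reduces one to a unipotent $T$ with unipotent roots $T_s$ over $\overline{\bA}$; one inducts on the index of $T-1$ via the quotients $M/\ker(T-1)$, and in the base case $(T-1)^2=0$ the splitting $M\cong\ker(T-1)\oplus M'$ puts $T_s$ in block upper-triangular form whose off-diagonal block $S_s$ is a literal block of the integral operator $T_s$; then $T_s^s=T$ gives $sS_s=S$ \emph{exactly}, and \Cref{eq:add0}, applied to the finitely generated projective $\bA$-module $\Hom_{\bA}(M',\ker(T-1))$, kills $S$. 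You would need either to carry out this reduction or to genuinely control the denominators; as written this case is open. A smaller but real gap sits in \Cref{item:mult}: the hypothesis on $\bA^{\times}$ only reaches the eigenvalue $\lambda$ through its norm down to $K$ (equivalently through determinants of $\tau_s$-stable $K$-rational blocks), and a unit whose norm is a root of unity need not be one (fundamental units of real quadratic fields); concluding that $\lambda$ itself is a root of unity requires transporting the multiplicative hypothesis to the integral closure containing the eigenvalues, or the height/degree argument you allude to, neither of which is supplied.
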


The statement of \Cref{th:gla} \Cref{item:add} is phrased so as to have \Cref{eq:add0} plug directly into the proof, but that condition has an alternative, perhaps more transparent (because more directly numerical) description. 

\begin{definition}\label{def:lchars}
  The set of {\it local characteristics} of a domain $\bA$ is
  \begin{equation*}
    \mathrm{lchar}(\bA) := \left\{\mathrm{char}(\bA/\fp)\ |\ \{0\}\ne \fp\trianglelefteq \bA\text{ prime}\right\}.
  \end{equation*}
\end{definition}

\begin{proposition}\label{pr:addiv}
  For a Dedekind domain $\bA$ the conditions \Cref{eq:add0,eq:addp} are jointly equivalent to
  \begin{equation}\label{eq:sumnu}
    \sum_{p\in\mathrm{lchar}(\bA)}\sup_{s\in S}~\nu_{p}(s)=\infty.
  \end{equation}
\end{proposition}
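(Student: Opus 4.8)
The plan is to treat the two characteristic cases separately, since for a given $\bA$ only one of \Cref{eq:add0}, \Cref{eq:addp} has content, the hypothesis of the other being false. Suppose first $\mathrm{char}(\bA)=p>0$, with $\bA$ not a field (fields are addressed at the end). Then $\bF_p\subseteq\bA$, every nonzero prime $\fp\trianglelefteq\bA$ is maximal, and $\bA/\fp$ is a field containing $\bF_p$, so $\mathrm{char}(\bA/\fp)=p$ for all such $\fp$; hence $\mathrm{lchar}(\bA)=\{p\}$ and \Cref{eq:sumnu} reduces to $\sup_{s\in\cS}\nu_p(s)=\infty$. As $\nu_p\lcm(s\mid s\in\cS)=\sup_{s\in\cS}\nu_p(s)$ straight from the definition of the supernatural least common multiple, this is precisely the condition $p\in\Pi_{\cS}$ of \Cref{eq:addp}, so the two are equivalent in this case.

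Now let $\mathrm{char}(\bA)=0$, so that $\bZ\subseteq\bA$; here the work lies in translating \Cref{eq:add0}. Writing $v_\fp$ for the $\fp$-adic valuation, an element $x\in A$ is $\cS$-divisible in $(A,+)$ iff $x\in s\bA$ for every $s\in\cS$, which by the ``to contain is to divide'' principle in $\bA$ means $v_\fp(x)\ge v_\fp(s)$ for every nonzero prime $\fp$ and every $s\in\cS$, i.e.\ $v_\fp(x)\ge\sup_{s\in\cS}v_\fp(s)$ for all $\fp$. Since a nonzero $x$ has $v_\fp(x)$ finite at every $\fp$ and zero at all but finitely many, a non-trivial $\cS$-divisible element exists iff $\sup_{s\in\cS}v_\fp(s)$ is finite at every $\fp$ and nonzero at only finitely many $\fp$: one implication is the previous sentence, and conversely, under those two finiteness conditions the product $\prod_\fp\fp^{\sup_{s}v_\fp(s)}$ is a well-defined nonzero integral ideal, every nonzero element of which is $\cS$-divisible. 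Negating, \Cref{eq:add0} is equivalent to the disjunction: $\sup_{s\in\cS}v_\fp(s)=\infty$ for some nonzero prime $\fp$, or $\sup_{s\in\cS}v_\fp(s)\ge1$ for infinitely many nonzero primes $\fp$.

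It remains to convert valuations on $\bA$ into $p$-adic valuations on $\bZ$. For a nonzero prime $\fp$ with residue characteristic $p:=\mathrm{char}(\bA/\fp)$ and any integer $s\ge1$ one has
\begin{equation*}
  v_\fp(s)=e_\fp\,\nu_p(s),\qquad e_\fp:=v_\fp(p)\in\{1,2,\dots\}:
\end{equation*}
if $p>0$ then $\fp\cap\bZ=p\bZ$, so in the DVR $\bA_\fp$ the prime-to-$p$ part of $s$ is a unit and only the $p$-primary part survives, whereas if $p=0$ then $s$ itself is a unit in $\bA_\fp$, giving $v_\fp(s)=0$. Consequently $\sup_s v_\fp(s)=\infty$ iff $\sup_s\nu_p(s)=\infty$, and $\sup_s v_\fp(s)\ge1$ iff $\sup_s\nu_p(s)\ge1$; primes of residue characteristic $0$ are invisible on both sides; and since $p\bA$ is nonzero with a finite prime factorization for every rational prime $p$, the primes $\fp$ with $\mathrm{char}(\bA/\fp)=p$ are finite in number and are nonempty precisely when $p\in\mathrm{lchar}(\bA)$. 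The disjunction of the previous paragraph thus becomes: $\sup_{s\in\cS}\nu_p(s)=\infty$ for some $p\in\mathrm{lchar}(\bA)$, or $\sup_{s\in\cS}\nu_p(s)\ge1$ for infinitely many $p\in\mathrm{lchar}(\bA)$ (the sum in \Cref{eq:sumnu} being taken over the prime numbers in $\mathrm{lchar}(\bA)$). Since a series $\sum_{p\in\mathrm{lchar}(\bA)}\sup_{s\in\cS}\nu_p(s)$ of elements of $\bZ_{\ge 0}\sqcup\{\infty\}$ diverges precisely when one term is $\infty$ or infinitely many terms are nonzero, this last disjunction is exactly \Cref{eq:sumnu}, completing the characteristic-zero case.

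I do not expect a real obstacle; the point needing care is the bookkeeping in the final step — the ramification factor $e_\fp$ and the finite-to-one assignment $\fp\mapsto\mathrm{char}(\bA/\fp)$ — together with one caveat: if $\bA$ is a field then $\mathrm{lchar}(\bA)=\varnothing$, so \Cref{eq:sumnu} fails, which matches \Cref{eq:add0} in characteristic $0$ (where the additive group, a $\bQ$-vector space, has plenty of $\cS$-divisible elements) but not \Cref{eq:addp} in positive characteristic, so there one should assume $\bA$ is not a field.
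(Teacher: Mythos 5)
Your proof is correct and follows essentially the same route as the paper's: both arguments come down to unique factorization of ideals in $\bA$, the fact that each rational prime lies in only finitely many primes of $\bA$, and matching the resulting disjunction (unbounded valuation at some prime, or infinitely many primes meeting $\cS$) against divergence of the sum; your explicit use of $v_{\fp}(s)=e_{\fp}\nu_p(s)$ and the product ideal is just tidier bookkeeping for the same idea. Your closing caveat is well taken: for a positive-characteristic \emph{field} one has $\mathrm{lchar}(\bA)=\varnothing$, so the stated equivalence requires $\bA$ not to be a field there, a point the paper's proof silently elides by asserting $\mathrm{lchar}(\bA)=\{p\}$.
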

\begin{proof}
  In positive characteristic $p$ the set $\mathrm{lchar}(\bA)$ is the singleton $\{p\}$, and \Cref{eq:sumnu} obviously rephrases \Cref{eq:addp}. Assuming henceforth that $\mathrm{char}(\bA)=0$, note that every prime $p\in \bZ_{>0}\subset \bA$ belongs to only finitely many prime ideals. For that reason, \Cref{eq:sumnu} can also be rendered as
  \begin{equation}\label{eq:sumnu'}
    \sum_{\text{primes }\fp\trianglelefteq \bA}\sup_{s\in S}~\nu_{\mathrm{char}(\bA/\fp)}(s)=\infty.
  \end{equation}
  Or, in words, (at least) one of the following two conditions obtains:
  \begin{enumerate}[(a)]
  \item\label{item:1p} there is some prime ideal $\fp\trianglelefteq \bA$ with
    \begin{equation*}
      \left\{\nu_p(s)\ |\ s\in\cS\right\}\text{ unbounded},\quad p:=\mathrm{char}(\bA/\fp);
    \end{equation*}
  \item\label{item:manyp} the set of prime ideals $\fp \trianglelefteq \bA$ containing some $s\in \cS$ is infinite. 
  \end{enumerate}  
  
  \begin{enumerate}[]
  \item {\bf \Cref{eq:add0} $\xRightarrow{\quad}$ \Cref{eq:sumnu'}:} The joint negation of \Cref{item:1p} and \Cref{item:manyp} means that there is a positive integer $n$ such that $\frac s{\gcd(s,n)}$, $s\in \cS$ belong to no prime ideals of $\bA$, and hence are invertible. $n\in\bZ\subseteq \bA$, then, will be $\cS$-divisible. 

  \item {\bf \Cref{eq:sumnu'} $\xRightarrow{\quad}$ \Cref{eq:add0}:} If \Cref{item:manyp} holds we are done, for an $\cS$-divisible element $x\in \bA$ would then belong to infinitely many prime ideals, as no non-zero $x$ can (since for $x\ne 0$ the principal ideal $(x)$ decomposes uniquely as a product finitely many prime ideals \cite[\S I.3, Corollary 3.9]{neuk_ant}).

    Assume \Cref{item:1p} holds instead. An $\cS$-divisible element is then $p^n$-divisible for every $n$, hence belongs to the trivial \cite[Corollary 10.18]{am_comm} intersection $\displaystyle \bigcap_n \fp^n\trianglelefteq \bA$.
  \end{enumerate}
\end{proof}

The setup of \Cref{th:gla} might appear somewhat contrived, but it covers (for appropriate $\cS$) the Dedekind domains of most interest in number theory: the rings of integers in either {\it local} or {\it global fields}. %of characteristic zero.

\begin{example}\label{ex:glob0}
  A {\it number field} $\bK$ is a finite extension of the rationals \cite[first sentence of Chapter 2]{marc_nfld}, which we may as well assume embedded in $\bC$. These are also the {\it global fields} of characteristic zero of \cite[\S II.12]{cf_ant_1967}. The corresponding {\it number ring} \cite[following Corollary 1 to Theorem 2]{marc_nfld} $\cO_{\bK}\subset \bK$, consisting of the algebraic integers in $\bK$, is a Dedekind domain \cite[Theorem 14]{marc_nfld}. 

  Any infinite $\cS$ will do: \Cref{eq:add0} obviously holds in its alternative incarnation as \Cref{eq:sumnu}, since $\mathrm{lchar}(\cO_{\bK})$ consists of {\it all} primes. As for the infinite-power property in the statement of \Cref{th:gla} \Cref{item:mult}, it follows from the fact that $\cO^{\times}$ is finitely generated as an abelian group (this is Dirichlet's celebrated {\it Unit Theorem}, usually stated much more precisely than we have any need to \cite[Theorem 38]{marc_nfld}). 
\end{example}

\begin{example}\label{ex:loc0}
  For a prime $p$, consider a finite extension $\bK$ of the field $\bQ_p$ of {\it $p$-adic numbers} \cite[\S II.1]{neuk_ant}. It is complete with respect to the unique extension $|\cdot|$ to $\bK$ \cite[\S II.4, Theorem 4.8]{neuk_ant} of the {\it $p$-adic norm } $|\cdot|_p$ of \cite[\S II.2]{neuk_ant}. Such $\bK$ are precisely the characteristic-0 {\it local fields} of \cite[\S II.5]{neuk_ant} (or \cite[Chapter VI, Introduction]{cf_ant_1967}). 

  The corresponding {\it discrete valuation ring}
  \begin{equation*}
    \cO_{\bK}:=\{x\in \bK\ |\ |x|\le 1\}
  \end{equation*}
  is a principal ideal domain \cite[\S I.1, Proposition 1]{ser_locf} (so in particular Dedekind). An infinite $\cS\subseteq \bZ_{>0}$ satisfies \Cref{eq:add0} if and only if $p\in \Pi_{\cS}$ (i.e. we can find $s\in S$ divisible by arbitrarily high powers of $p$), in which case the hypothesis of \Cref{th:gla} \Cref{item:mult} also holds.

  The first claim follows immediately from the fact that positive integers coprime to $p$ are invertible in $\cO_{\bK}$. To verify the second, recall the  direct-product decomposition (\cite[\S III.1, Proposition 1.1]{neuk_cft} or \cite[\S 15.1, (2')]{hasse_nt})
  \begin{equation}\label{eq:okuk}
    \cO_{\bK}^{\times}\cong (\text{finite cyclic group})\times U_{\bK}^{(1)},
  \end{equation}
  where the groups
    \begin{equation*}
      U_{\bK}^{(i)}:=1+\fm^i,\quad i\ge 1,\quad \fm\subset \cO_{\bK}\text{ is the unique maximal ideal }
    \end{equation*}
    are introduced in \cite[\S III.1]{neuk_cft} (also \cite[\S IV.2]{ser_locf} or \cite[\S 15.1]{hasse_nt}; in the latter, $\mathsf{H}_i=U_{\bK}^{(i)}$ and $\mathsf{H}=\mathsf{H}_1$). Similarly, 
    \begin{equation*}
      U_{\bK}^{(1)}\cong (\text{finite cyclic $p$-group})\times \bZ_p^{[\bK:\bQ_p]}
    \end{equation*}
    by \cite[\S XIV.4, Proposition 10]{ser_locf} or \cite[\S 15.5, One-unit theorem]{hasse_nt}, where $\bZ_p=\cO_{\bQ_p}$ is the ring of $p$-adic integers, regarded here as a group with its additive structure. All in all,
  \begin{equation*}
    \cO_{\bK}^{\times}\cong F\times \bZ_p^{[\bK:\bQ_p]},\quad F\text{ finite abelian},
  \end{equation*}
  whence the conclusion.
\end{example}

\begin{example}\label{ex:locp}
  The substance of the discussion in \Cref{ex:loc0} goes through ( that is, \Cref{th:gla} \Cref{item:mult} applies precisely when $p\in \Pi_{\cS}$) for rings of integers in {\it positive}-characteristic local fields: per \cite[Chapter VI, Introduction]{cf_ant_1967}, the fields $\bK=\Bbbk((t))$ of Laurent power series over finite fields $\Bbbk$ (whereupon $\cO_{\bK}=\Bbbk[[t]]$, the ring of formal power series).

  \Cref{eq:okuk} holds just as before, since the cited references are characteristic-blind on that count. As for $U_{\bK}^{(1)}$, it is this time simply a free $\bZ_p$-module \cite[\S 15.4, One-unit theorem]{hasse_nt} (albeit one of infinite rank this time). 
\end{example}

An application of \Cref{th:gla} to \Cref{ex:glob0} yields 

\begin{corollary}\label{cor:nrrings}
  Let $M$ be a finitely-generated projective module over a number ring $\cO_{\bK}$ and $\cS$ an infinite set of positive integers.

  An $\cS$-divisible one-to-one $T\in\End_{\cO_{\bK}}(M)$ is of finite order coprime to every $p\in \Pi_{\cS}$. In particular, $T=1$ provided for every prime $p$, there are elements of $\cS$ divisible by arbitrarily high powers of $p$. 
\end{corollary}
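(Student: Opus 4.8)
The plan is to specialize \Cref{th:gla} to the situation of \Cref{ex:glob0}, where $\bA=\cO_{\bK}$ is a number ring, and to extract the conclusions \Cref{item:split,item:add,item:mult} in turn. First I would use injectivity of $T$: an $\cS$-divisible endomorphism with $\cS$ infinite is in particular an arbitrarily high power, so \Cref{item:split} of \Cref{th:gla} gives $M=\ker T\oplus\mathrm{im}\,T$; since $T$ is one-to-one, $\ker T=0$, hence $M=\mathrm{im}\,T$ and $T=T|_{\mathrm{im}\,T}$ is invertible. This already identifies the restriction occurring in parts \Cref{item:add} and \Cref{item:mult} with $T$ itself.

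Next I would verify the two hypotheses that yield semisimplicity and finite order. For \Cref{item:add}: by \Cref{pr:addiv} the condition \Cref{eq:add0} is equivalent to divergence of \Cref{eq:sumnu}, and since every rational prime $p$ lies below some nonzero prime ideal of $\cO_{\bK}$, the set $\mathrm{lchar}(\cO_{\bK})$ is the set of all primes; hence \Cref{eq:sumnu} diverges for any infinite $\cS$ (if no single $\nu_p$ is unbounded on $\cS$, then $\cS$ being infinite forces infinitely many primes to divide its members, contributing infinitely many nonzero terms). So $T$ is semisimple. For \Cref{item:mult}: Dirichlet's Unit Theorem makes $\cO_{\bK}^{\times}$ a finitely generated abelian group, so $\cO_{\bK}^{\times}/\mathrm{torsion}(\cO_{\bK}^{\times})$ is free of finite rank and thus has no nonzero element divisible by arbitrarily large integers, hence no nontrivial $\cS$-divisible element. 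Therefore $T$ is of finite $\Pi_{\cS}'$-order, i.e. of finite order coprime to every $p\in\Pi_{\cS}$.

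For the concluding clause: the hypothesis that for every prime $p$ some $s\in\cS$ is divisible by arbitrarily high powers of $p$ says precisely that $\nu_p\lcm(s\mid s\in\cS)=\infty$ for all $p$, i.e. $\Pi_{\cS}$ is the set of all primes. An element of finite order coprime to every prime has order $1$, so $T=1$.

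I do not expect a genuine obstacle here — the statement is a direct specialization. The only points needing a little care are the bookkeeping that the semisimplicity and finite-order conclusions of \Cref{th:gla} concern $T|_{\mathrm{im}\,T}$ (harmless, as $T$ is injective), and the verification, already recorded in \Cref{ex:glob0}, that an arbitrary infinite $\cS$ satisfies \Cref{eq:add0} over a number ring and that $\cO_{\bK}^{\times}$ has no nontrivial $\cS$-divisible units modulo torsion.
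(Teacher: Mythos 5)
Your proposal is correct and follows the same route as the paper: specialize \Cref{th:gla} via the observations already made in \Cref{ex:glob0} (that $\mathrm{lchar}(\cO_{\bK})$ is all primes, so any infinite $\cS$ satisfies \Cref{eq:add0}, and that Dirichlet's Unit Theorem handles the multiplicative hypothesis), with injectivity of $T$ eliminating $\ker T$ from the decomposition of part \Cref{item:split}. The paper's proof is just a terser version of the same argument, and your spelled-out verification of \Cref{eq:sumnu} for arbitrary infinite $\cS$ is sound.
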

\begin{proof}
  \Cref{ex:glob0} notes that parts \Cref{item:add} and \Cref{item:mult} of \Cref{th:gla} apply to any infinite $\cS$, and the non-singularity condition disposes of $\ker T$.
\end{proof}

Specializing \Cref{cor:nrrings} further to $M:=\cO_{\bK}^m$ provides the following generalization of \cite{cvch_11401} (which in turn can be recovered by setting $\bK=\bQ$):

\begin{corollary}\label{cor:like-11401}
  Let $\cO_{\bK}$ be a number ring and $m$ a positive integer. The only non-singular matrix in $M_m(\cO_{\bK})$ that is an $n^{th}$ power therein for all but finitely many $n$ is the identity.  \qedhere
\end{corollary}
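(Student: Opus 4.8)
The plan is to deduce \Cref{cor:like-11401} directly from \Cref{cor:nrrings} by taking $M := \cO_{\bK}^m$, so that $\End_{\cO_{\bK}}(M) = M_m(\cO_{\bK})$, and observing that the hypothesis ``$n^{th}$ power for all but finitely many $n$'' is a special case of ``$\cS$-divisible for an infinite $\cS$'' with $\cS = \bZ_{>0}\setminus F$ for a finite exceptional set $F$. First I would note that $M = \cO_{\bK}^m$ is indeed finitely-generated free, hence finitely-generated projective, and that a matrix being non-singular over the domain $\cO_{\bK}$ is exactly the condition that the corresponding endomorphism $T$ is one-to-one (an injective endomorphism of a torsion-free module of finite rank, since $\det T \ne 0$). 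So the standing hypotheses of \Cref{cor:nrrings} are met.

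Next I would check that the cofinite set $\cS = \bZ_{>0}\setminus F$ has the property required in the last sentence of \Cref{cor:nrrings}: for every prime $p$ and every exponent $k$, the integer $p^k$ lies in $\cS$ once $p^k \notin F$, which holds for all sufficiently large $k$ since $F$ is finite; hence $\cS$ contains elements divisible by arbitrarily high powers of every prime. Invoking \Cref{cor:nrrings} then gives that $T$ is of finite order coprime to every $p \in \Pi_{\cS}$. But $\Pi_{\cS}$ is the set of \emph{all} primes here, by the computation just made (every prime divides some element of $\cS$ to an arbitrarily high power, so $\nu_p \lcm(s \mid s \in \cS) = \infty$). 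An element of a group having finite order coprime to every prime has order $1$, so $T = 1$, i.e.\ $T$ is the identity matrix. Conversely the identity is trivially such a matrix, so it is the \emph{only} one, which is the assertion.

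The only genuinely substantive point — and the one I would spell out rather than leave implicit — is the translation between the two phrasings of the hypothesis and the verification that $\Pi_{\cS}$ is the full set of primes for a cofinite $\cS$; everything else is bookkeeping. There is no real obstacle, since all the analytic and algebraic content (the Fitting-type splitting, semisimplicity from the additive constraint, and finite order from Dirichlet's unit theorem) has already been absorbed into \Cref{th:gla} and \Cref{cor:nrrings}; \Cref{cor:like-11401} is purely a matter of specializing $M$ and unwinding definitions. I would therefore keep the proof to two or three sentences, as the paper's own \texttt{\textbackslash qedhere} at the end of the statement already signals.
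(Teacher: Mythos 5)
Your proposal is correct and follows exactly the route the paper intends: \Cref{cor:like-11401} is obtained by specializing \Cref{cor:nrrings} to $M=\cO_{\bK}^m$, noting that a cofinite $\cS$ contains $p^k$ for every prime $p$ and all large $k$, so the finite order of $T$ is coprime to every prime and hence equals $1$. The paper leaves all of this implicit (hence the \texttt{\textbackslash qedhere} in the statement); your write-up merely supplies the routine verifications.
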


We will also consider Dedekind domains $\bA$ whose quotient fields are finite extensions of $\Bbbk(t)$ (the fields featuring in the definition of an {\it abstract smooth curve} \cite[\S I.6, following Corollary 6.6]{hrt_ag}), for positive-characteristic $\Bbbk$. When $\Bbbk$ is finite these are also the positive-characteristic global fields \cite[\S II.12]{cf_ant_1967}, ``globalizing'' \Cref{ex:locp} akin to the passage from \Cref{ex:loc0} to \Cref{ex:glob0}. 

\begin{corollary}\label{cor:globp}
  Let $\bA$ be a Dedekind domain whose field of fractions $\bK$ is a finite extension of $\Bbbk(t)$ for $p:=\mathrm{char}(\Bbbk)>0$, and $M$ a finitely-generated projective $\bA$-module.

  \begin{enumerate}[(1)]
  \item If $T\in \End_{\bA}(M)$ is $\cS$-divisible for an infinite $\cS\subseteq \bZ_{>0}$ with $p\in \Pi_{\cS}$ then $T$ is diagonalizable over the algebraic closure $\overline{\Bbbk}$ of $\Bbbk$. 
    
  \item In particular, $T=0\oplus T'$ with $T'$ of $p$-coprime finite order if the only $\cS$-divisible roots of unity in $\Bbbk^{\times}$ are roots of unity (e.g. if $\Bbbk$ is finite or, more generally, algebraic over its prime field). 
  \end{enumerate}
\end{corollary}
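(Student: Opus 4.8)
The plan is to deduce everything from Theorem \ref{th:gla}. Since $\mathrm{char}(\bA)=p>0$ one has $\mathrm{lchar}(\bA)=\{p\}$ (as in the proof of \Cref{pr:addiv}), so the hypothesis $p\in\Pi_{\cS}$ is exactly condition \Cref{eq:addp}; hence parts \Cref{item:split} and \Cref{item:add} of Theorem \ref{th:gla} apply and give $M=\ker T\oplus\mathrm{im}\,T$ with $T':=T|_{\mathrm{im}\,T}$ invertible and semisimple. Thus $T=0\oplus T'$, and since on $\ker T$ the operator has the single eigenvalue $0\in\overline\Bbbk$, the whole content of (1) is to upgrade ``$T'$ semisimple over $\bK$'' to ``$T'$ diagonalizable over $\overline\Bbbk$'' — a genuine strengthening, since over the imperfect field $\bK$ a semisimple operator may have inseparable eigenvalue-fields (consider $x^{p}-t$ over $\Bbbk(t)$).

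First I would note that $T'$ is again $\cS$-divisible: if $T=T_s^s$, then $\ker T=\ker T_s^s$ and $\mathrm{im}\,T=\mathrm{im}\,T_s^s$ force $T_s$ to preserve the decomposition $\ker T\oplus\mathrm{im}\,T$, so $T_s|_{\mathrm{im}\,T}$ is an $s$-th root of $T'$. Next, pass to the $\bK$-vector space $V:=\mathrm{im}\,T\otimes_{\bA}\bK$ and decompose it, as a semisimple $\bK[x]$-module, as $V\cong\bigoplus_j L_j^{\oplus m_j}$ with $L_j:=\bK[x]/(f_j)$ a finite field extension of $\bK$ and $T'$ acting on the $j$-th block through multiplication by some $\lambda_j\in L_j^{\times}$. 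An $s$-th root $T'_s$ of $T'$ commutes with $T'$ (as $T'_sT'=(T'_s)^{s+1}=T'T'_s$), hence respects the isotypic decomposition and acts $L_j$-linearly, i.e.\ as a matrix $A_{s,j}\in GL_{m_j}(L_j)$ with $A_{s,j}^{\,s}=\lambda_jI$; taking determinants, $\lambda_j^{m_j}$ is $\cS$-divisible in $L_j^{\times}$.

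Now I would bring in the arithmetic of the function field $L_j$, in two steps. (i) $L_j$ is the function field of a complete regular curve $X$ over its exact field of constants $k_0$, a finite extension of $\Bbbk$ and hence a subfield of $\overline\Bbbk$; the divisor homomorphism $L_j^{\times}\to\mathrm{Div}(X)$ has image in a free abelian group and kernel $k_0^{\times}$, so any element of $L_j^{\times}$ divisible by an unbounded (a fortiori, an infinite) set of positive integers lies in $k_0^{\times}$. Applying this to $\lambda_j^{m_j}$, and then extracting prime-to-$p$ and $p$-primary roots separately according to $m_j=p^a m_j'$ with $p\nmid m_j'$ (using that the perfect core of $k_0$ is closed under $p$-th roots), one gets $\lambda_j\in k_0\subseteq\overline\Bbbk$, and with it every $\bK$-conjugate of $\lambda_j$, since these satisfy the minimal polynomial of $\lambda_j$ over $\Bbbk$. (ii) Because $p\in\Pi_{\cS}$, $\lambda_j^{m_j}$ — and hence, by the same descent, $\lambda_j$ — is divisible by arbitrarily high powers of $p$ in $L_j^{\times}=\bK(\lambda_j)^{\times}$, so $\lambda_j$ lies in the perfect core of $\bK(\lambda_j)$; writing $\lambda_j=\beta^{p}$ with $\beta\in\bK(\lambda_j)$ gives $\bK(\beta)=\bK(\lambda_j)=\bK(\beta^{p})$, which is impossible for $\beta$ inseparable over $\bK$ (then $\bK(\beta^{p})\subsetneq\bK(\beta)$), so $\beta$, and hence $\bK(\lambda_j)$ and $\lambda_j$, is separable over $\bK$. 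Combining (i) and (ii), each $f_j$ splits into distinct linear factors over $\overline\Bbbk$; distinct $f_j$'s being coprime, so does the minimal polynomial $\prod_j f_j$ of $T'$, which is diagonalizability over $\overline\Bbbk$. This proves (1).

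For (2) I would invoke Theorem \ref{th:gla}\Cref{item:mult}, verifying its hypothesis that $(\bA^{\times}/\mathrm{torsion}(\bA^{\times}),\cdot)$ has no non-trivial $\cS$-divisible elements: the divisor map exhibits $\bA^{\times}$ as an extension of a free abelian group (divisors supported at the finitely many points of the complete model missing from $\mathrm{Spec}\,\bA$) by $k_0^{\times}$, and an $\cS$-divisible element of $k_0^{\times}$ has $\cS$-divisible norm in $\Bbbk^{\times}$, hence — by hypothesis — is a root of unity, whence $k_0^{\times}/\mathrm{torsion}$, and then $\bA^{\times}/\mathrm{torsion}$, carries no such elements. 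Theorem \ref{th:gla}\Cref{item:mult} then makes $T'$ of finite $\Pi_{\cS}'$-order, which is prime to $p$ as $p\in\Pi_{\cS}$; with $T=0\oplus T'$ this is the assertion. (When $\Bbbk$ is finite, or merely algebraic over its prime field, $\Bbbk^{\times}$ and all its finite extensions consist of roots of unity, so the hypothesis is automatic.) The step I expect to be genuinely delicate is the separability argument (ii): the divisibility hypothesis must be spent a second time precisely to exclude inseparable eigenvalue-fields, and one must be careful that the determinant trick yields only $\lambda_j^{m_j}$, not $\lambda_j$, as a highly divisible element, so the descent to $\lambda_j$ has to be carried out before the perfect-core argument can bite.
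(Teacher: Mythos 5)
Your overall route is the paper's own: everything is funneled through \Cref{th:gla}, plus the observation that an arbitrarily highly divisible element of a function field over $\Bbbk$ has divisor divisible by infinitely many integers in a free abelian group, hence is a constant, hence lies in $\overline{\Bbbk}$. The paper's proof is literally one sentence to this effect, so almost all of your write-up consists of details the paper suppresses, and most of them check out: the reduction of the divisibility of $T'$ to that of $\lambda_j^{m_j}$ via determinants on isotypic blocks, the descent from $\lambda_j^{m_j}$ to $\lambda_j$ (which does work, via B\'ezout on $\gcd(m_j',p^n)=1$ after splitting off the $p$-part), and the $\bK(\beta)=\bK(\beta^p)$ separability criterion are all sound. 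One remark on economy: the separability step (ii) is only needed because you read the conclusion of \Cref{th:gla}\Cref{item:add} as semisimplicity of the $\bK[x]$-module. In the paper's proof of that item the unipotent factor of the multiplicative Jordan decomposition over $\overline{\bK}$ is shown to be trivial (\Cref{le:unip1}), so the output is already diagonalizability over $\overline{\bK}$, i.e.\ a separable squarefree minimal polynomial; your step (ii) independently re-derives this, which is a legitimate (and arguably clarifying) alternative given the ambiguity of ``semisimple'' over an imperfect field.

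There is one genuine gap, in part (2): the inference from ``$N_{k_0/\Bbbk}(u)$ is an $\cS$-divisible element of $\Bbbk^{\times}$, hence a root of unity'' to ``$u$ is trivial in $k_0^{\times}/\mathrm{torsion}$'' is invalid. Elements of norm a root of unity in a finite extension $k_0/\Bbbk$ need not be torsion (norm-one subgroups are typically large), so the hypothesis on $\Bbbk^{\times}$ does not transfer to $k_0^{\times}$ by taking norms. What you actually need is that $\cS$-divisible elements of the relevant \emph{finite extensions} of $\Bbbk$ inside $\overline{\Bbbk}$ are roots of unity; in the cases the corollary advertises ($\Bbbk$ finite, or algebraic over $\bF_p$) this is automatic because every element of every finite extension of $\Bbbk$ is then a root of unity, so no norm argument is needed at all. (The statement of the corollary itself garbles this hypothesis --- ``the only $\cS$-divisible roots of unity \dots are roots of unity'' --- so some imprecision here is forgivable, but the norm step as written should be replaced or restricted to the algebraic-over-$\bF_p$ case.) A small cosmetic point: the complement of $\mathrm{Spec}\,\bA$ in the complete model need not be finite (e.g.\ $\bA$ a discrete valuation ring), but your argument only uses that the divisor group is free abelian, so nothing is lost.
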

\begin{proof}
  All of this follows from \Cref{th:gla} (and its proof) upon noting that the arbitrarily highly divisible elements of $\bK^{\times}$ must be algebraic over $\Bbbk\subset \Bbbk(t)\subseteqq \bK$.
\end{proof}

The direct-sum decomposition of \Cref{th:gla} \Cref{item:split} is fairly easily dispatched. It relies in part on the following simple general remark, itself a variant of the {\it Fitting lemma} (variants of which appear as \cite[\S 15.1, Exercise 5]{df_3e}, \cite[\S 3.3, preceding Theorem 3.7]{jac_basic-2_2e}, etc.):

\begin{lemma}\label{le:fitt}
  Let $M$ be a noetherian module over a commutative ring $\bA$ and $T\in\End_{\bA}(M)$.

  If the endomorphism $\overline{T}$ induced by $T$ on $M/\ker T^n$ is onto for some $n$, then $M=\ker T^m\oplus \mathrm{im}~T^m$ and $T|_{\mathrm{im}~T^m}$ is an automorphism for $m\gg 0$. 
\end{lemma}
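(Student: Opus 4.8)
The plan is to deduce \Cref{le:fitt} from the usual Fitting decomposition by showing that the ascending chain $\ker T\subseteq \ker T^2\subseteq\cdots$ and the descending chain $\mathrm{im}~T\supseteq \mathrm{im}~T^2\supseteq\cdots$ both stabilize, and that the hypothesis on $\overline T$ forces the image chain to stabilize at the \emph{same} index where the kernel chain does. First I would invoke noetherianity to pick $m$ with $\ker T^m=\ker T^{m+1}=\cdots$; it is a standard fact (and easy to recheck) that this immediately gives $\ker T^{2m}=\ker T^m$, hence $\ker T^m\cap \mathrm{im}~T^m=0$: if $x=T^m y$ and $T^m x=0$ then $T^{2m}y=0$, so $y\in \ker T^{2m}=\ker T^m$ and $x=T^m y=0$.

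The content beyond the purely formal part is to get the sum $\ker T^m+\mathrm{im}~T^m=M$ out of the surjectivity of $\overline T$ on $M/\ker T^n$. The key observation is that $\overline T$ surjective means $M=\ker T^n+\mathrm{im}~T$, and iterating $k$ times gives $M=\ker T^n+\mathrm{im}~T^k$ for every $k\ge 1$ (using $T(\ker T^n)\subseteq \ker T^n$ to reabsorb the correction terms). Taking $k\ge n$ large enough that also $\mathrm{im}~T^k$ has stabilized, and enlarging $m:=\max(n,k,\text{kernel-stabilization index})$, one obtains $M=\ker T^m+\mathrm{im}~T^m$. Combined with the intersection computation of the previous paragraph this yields the internal direct sum $M=\ker T^m\oplus \mathrm{im}~T^m$. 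Finally, $T$ maps $\mathrm{im}~T^m$ into itself, and on this summand it is injective (the intersection with $\ker T^m\supseteq \ker T$ is zero) and surjective (because $\mathrm{im}~T^{m+1}=\mathrm{im}~T^m$ by the choice of $m$), hence an automorphism.

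I do not anticipate a genuine obstacle here: every step is a short diagram-chase or a standard noetherian stabilization argument, and the only mild subtlety is bookkeeping — making sure a single index $m$ simultaneously stabilizes the kernel chain, stabilizes the image chain, and is $\ge n$, so that all three facts ($\ker T^m\cap\mathrm{im}~T^m=0$, $\ker T^m+\mathrm{im}~T^m=M$, $\mathrm{im}~T^{m+1}=\mathrm{im}~T^m$) hold at once. One should also remark that $\ker T^m$ and $\mathrm{im}~T^m$ are $T$-stable and that the phrase ``for $m\gg 0$'' in the statement is justified because once the decomposition holds for one such $m$ it holds for all larger ones (both chains having stabilized). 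A closing sentence will note that commutativity of $\bA$ is used only to ensure $\End_{\bA}(M)$ acts on $M$ in the evident way and is not otherwise essential, but since the intended application is to Dedekind domains there is no need to pursue greater generality.
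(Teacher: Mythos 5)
Your argument is correct in substance and follows the same Fitting-type route as the paper, which simply outsources the three key facts (stabilization of the kernel chain, directness of the sum $\ker T^m+\mathrm{im}~T^m$, and invertibility of $\overline{T}$ on the quotient) to \cite[\S 15.1, Exercise 5]{df_3e} and then checks in $M/\ker T^m$ that the sum is all of $M$; you instead carry out the whole computation inside $M$ itself, which is a perfectly good self-contained alternative. One point of justification needs repair, though it does not affect the outcome: you speak of taking $k$ ``large enough that $\mathrm{im}~T^k$ has stabilized'' and later invoke $\mathrm{im}~T^{m+1}=\mathrm{im}~T^m$ ``by the choice of $m$'', as if the descending image chain stabilized for the same formal reason as the ascending kernel chain. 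It does not: noetherianity gives the ascending chain condition only (multiplication by $2$ on $\bZ$ has $\mathrm{im}~T^k=2^k\bZ$ strictly decreasing). The stabilization of the image chain here is a consequence of the surjectivity hypothesis, and your own intermediate identity already delivers it: from $M=\ker T^n+\mathrm{im}~T$ one gets, for any $m\ge n$,
\begin{equation*}
  \mathrm{im}~T^m=T^m(M)=T^m(\ker T^n)+T^{m+1}(M)=\mathrm{im}~T^{m+1},
\end{equation*}
since $T^m$ kills $\ker T^n$. With that one line inserted (and noting that the equality $M=\ker T^m+\mathrm{im}~T^m$ never needed image stabilization in the first place, since $M=\ker T^n+\mathrm{im}~T^k$ holds for every $k$), the proof is complete. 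Your closing remarks about $T$-stability of the summands, monotonicity in $m$, and the inessential role of commutativity are all accurate.
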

\begin{proof}
  The already-cited \cite[\S 15.1, Exercise 5]{df_3e} shows that
  \begin{itemize}
  \item the non-decreasing chain of submodules $\ker T^m$ stabilizes;
  \item the sum
    \begin{equation}\label{eq:kerim}
      \ker T^m+ \mathrm{im}~T^m \le M,\quad m\gg 0
    \end{equation}
    is direct;
  \item and $\overline{T}$ is in fact an {\it auto}morphism of $M/\ker T^m$, $m\gg 0$. 
  \end{itemize}
  The conclusion follows immediately:
  \begin{equation*}    
    \ker T^m+\mathrm{im}~T^m/\ker T^m
    =
    \mathrm{im}~\overline{T}^m
    =
    \mathrm{im}~\overline{T}
    =
    \ker T^m+\mathrm{im}~T/\ker T^m
    =
    M/\ker T^m,
  \end{equation*}
  so \Cref{eq:kerim} cannot be proper. 
\end{proof}

\begin{lemma}\label{le:unip1}
  Let $\bA$ be a Dedekind domain with quotient field $\bK$, $M$, $T$ and $\cS$ as in \Cref{th:gla}, and assume \Cref{eq:add0,eq:addp}. Denote also by $\bA\subseteq \overline{\bA}\subset \overline{\bK}$ the integral closure of $\bA$ in the algebraic closure $\overline{\bK}\supseteq \bK$.

  If $T\in\End_{\bA}(M)$ is unipotent and $\cS$-divisible in $\End_{\overline{\bA}}(M\otimes_{\bA} \overline{\bA})$ then it is the identity.  
\end{lemma}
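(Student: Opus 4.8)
The plan is to show that the nilpotent operator $N:=T-1$ vanishes. Suppose it does not, let $j\ge 2$ be its nilpotency index on $M$, and let $r$ be the rank of $M$. Since $\End_{\bA}(M)$ and all of its localizations and base changes along inclusions $\bA\hookrightarrow R$ into domains are projective, hence torsion-free, $N^{j-1}$ stays non-zero in $\End_{\bA_{\fp}}(M_{\fp})$ for every prime $\fp$, in $\End_{\overline{\bA}}(\Lambda)$ with $\Lambda:=M\otimes_{\bA}\overline{\bA}$, and in $\End_{\overline{\bA}_{\mathfrak{P}}}(\Lambda\otimes_{\overline{\bA}}\overline{\bA}_{\mathfrak{P}})$ for any prime $\mathfrak{P}$ of $\overline{\bA}$. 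Fix also an $\bA$-linear map $\ell\colon\End_{\bA}(M)\to\bA$ with $c:=\ell(N^{j-1})\ne 0$ (possible because a projective module over a domain is separated by its dual); note $c$ lies in only finitely many primes of $\bA$. By \Cref{pr:addiv} and its proof it suffices to derive a contradiction under the positive-characteristic hypothesis \Cref{eq:addp}, and in characteristic zero under each of the two alternatives \Cref{item:1p} and \Cref{item:manyp} into which \Cref{eq:add0} splits.

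\emph{Characteristic $p>0$.} As $p\in\Pi_{\cS}$, pick $m$ with $p^m\ge r$ and $s\in\cS$ with $p^m\mid s$. Then $T=(T_s^{s/p^m})^{p^m}$, and the ``freshman's dream'' identity $(1+B)^{p^m}=1+B^{p^m}$ valid in characteristic $p$ gives $N=B^{p^m}$ with $B:=T_s^{s/p^m}-1\in\End_{\overline{\bA}}(\Lambda)$ nilpotent. Then $B^{p^m(j-1)}=N^{j-1}\ne 0$ forces the nilpotency index of $B$ past $p^m(j-1)\ge p^m\ge r$, contradicting the fact that a nilpotent endomorphism of a rank-$r$ projective module has index at most $r$.

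\emph{Characteristic $0$, alternative \Cref{item:1p}: some rational prime $p$ has $\sup_{s\in\cS}\nu_p(s)=\infty$.} Fix a prime $\mathfrak{P}$ of $\overline{\bA}$ above $p$, normalize $\nu_{\mathfrak{P}}(p)=1$, and let $\overline{\bA}_{\mathfrak{P}}$ be the associated valuation ring (a localization of the Pr\"ufer domain $\overline{\bA}$), $\Lambda_{\mathfrak{P}}:=\Lambda\otimes_{\overline{\bA}}\overline{\bA}_{\mathfrak{P}}$ being free. For $s\in\cS$, decompose $V:=M\otimes_{\bA}\overline{\bK}=\bigoplus_{\zeta}V_{\zeta}$ into the generalized eigenspaces of $T_s$; the eigenvalues $\zeta$ are $s^{th}$ roots of unity since $T_s^s=T$ is unipotent. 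On $V_{\zeta}$ one has $T_s=\zeta(1+n_{\zeta})$ with $n_{\zeta}:=\zeta^{-1}T_s|_{V_{\zeta}}-1$ nilpotent and $T|_{V_{\zeta}}=(1+n_{\zeta})^s$, so expanding the terminating binomial series and retaining the top nilpotent power gives $N^{j-1}|_{V_{\zeta}}=s^{j-1}n_{\zeta}^{j-1}$ (trivially so when the local index of $n_{\zeta}$ is $<j$). With $e_{\zeta}$ the spectral projection onto $V_{\zeta}$, this reads $N^{j-1}=s^{j-1}E$, $E=\sum_{\zeta}\zeta^{-(j-1)}e_{\zeta}(T_s-\zeta)^{j-1}$. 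The crux is that the $\mathfrak{P}$-denominators of $E$ are bounded independently of $s$: each $e_{\zeta}$ is a $\bigl(\prod_{\zeta'\ne\zeta}(\zeta-\zeta')^{m_{\zeta'}}\bigr)^{-1}$-multiple of an $\overline{\bA}$-integral operator, and $\nu_{\mathfrak{P}}(\zeta-\zeta')\le\nu_{\mathfrak{P}}(p)=1$, because $\zeta/\zeta'$ is a root of unity and $1-\xi$ is a $\mathfrak{P}$-unit unless $\xi$ has $p$-power order, in which event its valuation is at most $\nu_{\mathfrak{P}}(p)$. Hence $p^{C}E\in\End_{\overline{\bA}_{\mathfrak{P}}}(\Lambda_{\mathfrak{P}})$ with $C$ depending only on $r$, so $\nu_{\mathfrak{P}}(N^{j-1})\ge(j-1)\nu_{\mathfrak{P}}(s)-C\ge(j-1)\nu_p(s)-C$, which is unbounded over $\cS$; thus $N^{j-1}=0$ in $\End_{\overline{\bA}_{\mathfrak{P}}}(\Lambda_{\mathfrak{P}})$, contradicting its non-vanishing there.

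\emph{Characteristic $0$, alternative \Cref{item:manyp}: infinitely many primes of $\bA$ meet $\cS$ --- equivalently, infinitely many rational primes divide elements of $\cS$.} Pick a rational prime $p>r$ dividing some $s\in\cS$ and a prime $\mathfrak{P}$ of $\overline{\bA}$ over $p$ with $c\notin\mathfrak{P}$; all but finitely many $\mathfrak{P}$ qualify. Reducing modulo $\mathfrak{P}$ over the characteristic-$p$ field $k=\overline{\bA}/\mathfrak{P}$, the induced $\overline{T}$ on $V_k:=M\otimes_{\bA}k$ (of dimension $r$) is unipotent; since $p\mid s$, $\overline{T}=(\overline{T_s}^{\,s/p})^{p}=1+\overline{B}^{\,p}$ with $\overline{B}:=\overline{T_s}^{\,s/p}-1$, and $\overline{N}^{j-1}=\overline{B}^{\,p(j-1)}\ne 0$ because the reduction of $\ell$ sends $\overline{N}^{j-1}$ to $c\bmod\mathfrak{P}\ne 0$. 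As before the nilpotency index of $\overline{B}$ then exceeds $p(j-1)\ge p>r=\dim_k V_k$, which is impossible. The step I expect to be the real obstacle is the bounded-denominator estimate in the \Cref{item:1p} case --- tying the fixed operator $N^{j-1}$ over $\bA$ to the $s$-dependent roots $T_s$ over $\overline{\bA}$ through the spectral decomposition, and showing that the denominators introduced by the spectral projections are controlled at one place $\mathfrak{P}$ by a bound depending only on $\operatorname{rk}M$; the remaining ingredients (the binomial and freshman's-dream identities and the arithmetic of differences of roots of unity) are routine.
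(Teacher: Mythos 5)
Your proof is correct, but it takes a genuinely different route from the paper's. The paper first replaces each $T_s$ by its unipotent Jordan factor, then inducts on the nilpotency degree of $T-1$ to reduce to the two-step case $(T-1)^2=0$, where the relation $T_s^s=T$ becomes $S=sS_s$ on the off-diagonal block and the hypothesis \Cref{eq:add0} (resp.\ the divisibility of $s/p^m$ by $p=\mathrm{char}$) kills $S$ directly. You instead fix the top non-vanishing power $N^{j-1}$ of $N=T-1$, trade the abstract non-divisibility hypothesis for the concrete dichotomy extracted from \Cref{pr:addiv}, and argue locally at a single prime $\mathfrak{P}$ of $\overline{\bA}$: when the residue characteristic $p$ divides some $s\in\cS$ to a high power, the freshman's-dream identity (in characteristic $p$, or after reduction mod $\mathfrak{P}$) pits $N^{j-1}$ against the rank bound on nilpotency indices, while in characteristic $0$ the estimate $\nu_{\mathfrak P}(N^{j-1})\ge (j-1)\nu_p(s)-C$ kills it $\mathfrak P$-adically; in the ``infinitely many residue characteristics'' alternative you reduce modulo a large prime avoiding the witness $c=\ell(N^{j-1})$. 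The paper's argument is more structural and consumes \Cref{eq:add0} as stated; yours makes visible \emph{why} the dichotomy of \Cref{pr:addiv} is exactly the right condition, and your characteristic-$p$ and reduction-mod-$\mathfrak{P}$ branches are arguably cleaner than the block-matrix computation. Two imprecisions, neither fatal: (i) the denominator of the spectral projection $e_\zeta$ is not literally $\prod_{\zeta'\ne\zeta}(\zeta-\zeta')^{m_{\zeta'}}$ --- the interpolating polynomial multiplying $\prod_{\zeta'\ne\zeta}(T_s-\zeta')^{m_{\zeta'}}$ contributes further factors of the $(\zeta-\zeta')^{-1}$ --- but the total number of such factors is still bounded by a function of $r$ alone, and each has $\nu_{\mathfrak P}\le\nu_{\mathfrak P}(p)$, which is all your bound $C=C(r)$ requires; (ii) in both characteristic-zero alternatives the rational prime $p$ must additionally be a residue characteristic of $\bA$ (equivalently, a non-unit in $\bA$), since otherwise no prime $\mathfrak P$ of $\overline{\bA}$ lies over it; this is precisely what \Cref{item:1p} and \Cref{item:manyp} in the proof of \Cref{pr:addiv} supply, so your appeal to that proposition covers it, but your paraphrases of the two alternatives drop the condition and should be restored to the paper's formulation.
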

\begin{proof}
  Set $E:=\End_{\bA}(M)$ and denote by subscripts modules obtained by scalar extension: $M_{\bK}:=M\otimes_{\bA}\bK$,
  \begin{equation*}
    E_{\overline{\bA}}:=E\otimes_{\bA}\overline{\bA}\cong \End_{\overline{\bA}}\left(M_{\overline{\bA}}\right),
  \end{equation*}
  and so on. 

  Fix $T_s\in E_{\overline{\bA}}$ with $T_s^s=T$, $s\in \cS$. The eigenvalues of $T_s$ are roots of unity (since those of $T$ are 1: this is what {\it unipotence} \cite[\S I.4]{brl_lalgps} means). It follows that the semisimple factor $R_s$ in the {\it multiplicative Jordan decomposition} \cite[\S I.4, Corollary 1 to Proposition 4.2]{brl_lalgps} $T_s=R_sU_s$ belongs to $E_{\overline{\bA}}\subset E_{\bK}= \End_{\bK}(M_{\bK})$ along with its inverse, so that $U_s\in E_{\overline{\bA}}$ as well. Working with $U_s$ in place of $T_s$, we may now assume the latter unipotent. 

  We argue inductively on the minimal $n$ with $(T-1)^n=0$, with the inductive step consisting of substituting $M':=M/\ker(T-1)$ (also torsion-free) for $M$ and replacing $T$ and $T_s$ with the operators induced thereon. It will thus be enough to assume that $(T-1)^2=0$ (the base case of the induction).
  
  $M'$ (because it is finitely-generated torsion-free) being projective, there is a (non-canonical) decomposition $M\cong \ker (T-1)\oplus M'$ that transports over to $M_{\overline{\bA}}$ and gives block upper-triangular decompositions
  \begin{equation*}
    T=
    \begin{pmatrix}
      1&S\\
      0&1
    \end{pmatrix}
    ,\quad
    T_s=
    \begin{pmatrix}
      1&S_s\\
      0&U_s
    \end{pmatrix}
    ,\quad
    U_s^s=1.
  \end{equation*}
  Consider the two cases:
      \begin{enumerate}[(a)]

      \item In characteristic 0 the $U_s$ are identities (being both unipotent and of finite order) so that
        \begin{equation*}
          \begin{aligned}
            T_s^s=T
            &\ \xRightarrow{\quad}\ 
              sS_s = S\\
            &\ \xRightarrow{\quad}\ 
              S\text{ is $\cS$-divisible in } \Hom_{\overline{\bA}}\left(M'_{\overline{\bA}},\ \ker(T-1)_{\overline{\bA}}\right)\cong \Hom_A(M',\ker(T-1))_{\overline{\bA}}.
          \end{aligned}          
        \end{equation*}
        Since the morphism space is projective finitely-generated over $\bA$, the latter's assumed $\cS$-non-divisibility implies that $S$ vanishes and hence $T=1$.

      \item In characteristic $p>0$ we still have
        \begin{equation*}
          U_s^{p^{\nu_p(s)}}=1,\ \forall s\in \cS,        
        \end{equation*}
        since those powers of $U_s$ are both unipotent and roots of unity of orders $\frac{s}{p^{\nu_p(s)}}$ (coprime to $p$). Because the $U_s$ all operate on the same finite-dimensional vector space $M_{\bK}$, there is some $m$ such that
        \begin{equation*}
          U_s^{p^m}=1,\quad \forall s\in \cS. 
        \end{equation*}
        Our assumption that $p\in \Pi_{\cS}$ implies that $\nu_p(s)>m$ for at least one $s\in \cS$; $S$ then vanishes, being a multiple of the $p$-divisible $\frac s{p^m}$.
      \end{enumerate}
      This concludes the proof. 
\end{proof}

\pf{th:gla}
\begin{th:gla}
  \begin{description}
  \item {\bf \Cref{item:from}} is immediate: if $M\cong \ker T\oplus P$ with $T|_P$ of order $d$, then $T=(T^m)^n$ whenever $mn=1\mod d$; if $n$ and $d$ are coprime then such an $m$ always exists, hence the conclusion.

  \item {\bf \Cref{item:split}:} Note first that if $T\in \End_{\bA}(M)$ is nilpotent and arbitrarily highly divisible, then it vanishes. Indeed, the operators $T$ and
    \begin{equation*}
      T_s\in\End_{\bA}(M),\quad T_s^s=T,\quad \forall s\in \cS
    \end{equation*}
    on the $r$-dimensional ($r:=\mathrm{rank}(M)$) vector space $M_{\bK}:=M\otimes_{A}\bK$ over the quotient field $\bK$ of $\bA$ are all nilpotent, so \cite[\S 12.3, Exercise 32]{df_3e} $T_s^{r}=0$, $\forall s$. But then $T=T_s^{s}$ vanishes as soon as $s\ge r$.  
    
    In general, the preceding argument shows that the restriction of $T$ to the {\it generalized kernel}
    \begin{equation*}
      \ker_{gen} T:=\{v\in M\ |\ T^nv=0\text{ for some }n\}
    \end{equation*}
    vanishes, so that $\ker_{gen}T=\ker T$. But then $\ker T$ and $\mathrm{im}~T$ already intersect trivially and the sum
    \begin{equation*}
      \ker T+\mathrm{im}~T\le M
    \end{equation*}
    is direct. We will then be able to conclude via \Cref{le:fitt} (and its proof) as soon as we argue that the operator $\overline{T}$ induced by $T$ on $\overline{M}:=M/\ker T$ is onto (and hence invertible).

    To see this, note that $\overline{M}$ is again projective finitely generated, so that one can speak of determinants of operators thereon. Now, the principal ideal $(\det T)\trianglelefteq \bA$ is an arbitrarily high power in the multiplicative group of {\it fractional ideals} \cite[\S I.3, Definition 3.7]{neuk_ant}:
    \begin{equation*}      
      (\det T) = (\det T_s)^{s}
      \ \trianglelefteq\ 
      \bA
      ,\quad
      s\in \cS.
    \end{equation*}
    That group being free abelian on the set of prime ideals \cite[\S I.3, Corollary 3.9]{neuk_ant}, it follows that $\det T$ is invertible in $\bA$.   
    
    Restricting $T$ and all of the $T_s$ to the summand $\mathrm{im}~T\le M$, we can (and throughout the remainder of the proof will)  assume $T$ invertible. 

  \item {\bf \Cref{item:add}:} Assuming invertibility, we prove semisimplicity. Extend $\bK$ to an overfield $L$ by adjoining the eigenvalues $\alpha_i$ of $T$. Those eigenvalues are integral over $\bA$, by the familiar argument (via \cite[Proposition 5.1]{am_comm}, say): $\End_{\bA}(M)$ is finitely-generated as an $\bA$-module, hence so is the $\bA$-submodule generated as an $\bA$-algebra by $T$. In other words $\alpha_i\in \bB$, the integral closure of $\bA$ in $L$, itself a Dedekind domain \cite[\S I.12, Proposition 12.8]{neuk_ant}.

    Observe next that the hypothesis of \Cref{th:gla} \Cref{item:add} transports over from $\bA$ to $\bB$: for \Cref{eq:addp} this is clear, since the two rings have the same characteristic, while for \Cref{eq:add0} the claim follows from \Cref{pr:addiv} and the fact that every prime $\fp\trianglelefteq \bA$ is contained in (and the intersection of $\bA$ with) finitely many $\mathfrak{P}_i\trianglelefteq \bB$ (\cite[\S I.8, following Proposition 8.1]{neuk_ant} or \cite[Chapter 5, Exercise 15]{am_comm}). 
    
    The upshot of all of this is that we may substitute $\bB$ and $L$ for $\bA$ and $\bK$ respectively, or, what is more alphabetically economical, simply assume that $\alpha_i\in \bA$. But then the factors of the multiplicative Jordan decomposition $T=R U$ both belong to $\End_{\bA}(M)$, those of the analogous factorizations $T_s=R_s U_s$ belong to
    \begin{equation*}
      \End_{\overline{\bA}}(M\otimes_{\bA}\overline{\bA})
      ,\quad
      \overline{\bA}:=\text{integral closure of $\bA$ in the algebraic closure }\overline{\bK}\supseteq \bK,
    \end{equation*}
    and we can conclude by applying \Cref{le:unip1} to the $\cS$-divisibility $U=U_s^s$, $s\in \cS$ of the unipotent operator $U$ (in place of $T$) that $U=1$.

  \item {\bf \Cref{item:mult}:} Because we are assuming that the only $\cS$-divisible elements of $\bA^{\times}$ are roots of unity, so is $\det T$ and hence also the eigenvalues of $T$. But then $T$ is also semisimple by part \Cref{item:add}, hence the finite-order claim. 

    As to the constraint on the order of $T$: for every $p\in \Pi_{\cS}$ there is some $s$ for which lifting to the $s^{th}$ power annihilates the entire {\it $p$-primary component} of $\mathrm{torsion}(A^{\times})$ (i.e. the group of elements whose order is a power of $p$ \cite[\S 4.5, Example (2) following Corollary 20]{df_3e}), and hence the order of every $s^{th}$ power is coprime to $p$. The conclusion follows from
      \begin{equation*}
        (\det T_s)^{s} = \det T,\ \forall s.
      \end{equation*}    
  \end{description}
  This finishes the proof. 
\end{th:gla}

It is perhaps worth noting that occasionally, the multiplicative constraint of \Cref{th:gla} \Cref{item:mult} {\it follows} from the hypothesis of part \Cref{item:add}:

\begin{proposition}\label{pr:add2mult}
  Let $\cS\subseteq \bZ_{>0}$ and $\bA$ a Dedekind domain with at least one finite residue field of characteristic $p\in \Pi_{\cS}$.

  The hypotheses of \Cref{th:gla} \Cref{item:add} and \Cref{item:mult} are then met.
\end{proposition}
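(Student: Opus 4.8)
The plan is to verify, one at a time, the three conditions that make up the hypotheses of \Cref{th:gla} \Cref{item:add} and \Cref{item:mult}. Fix once and for all a non-zero prime $\fp\trianglelefteq\bA$ with residue field $k:=\bA/\fp$ finite of characteristic $p\in\Pi_{\cS}$. Note first that $p\in\Pi_{\cS}$ already forces $\cS$ to be infinite: it says $\sup_{s\in\cS}\nu_p(s)=\infty$, so $\cS$ contains elements divisible by $p^n$ for every $n$ and is therefore unbounded. This takes care of the standing ``$\cS$ infinite'' requirement in \Cref{th:gla}.

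Next I would dispatch the ``additive'' conditions \Cref{eq:add0} and \Cref{eq:addp}. If $\mathrm{char}(\bA)=q>0$, then $k$, being a non-zero quotient ring of $\bA$, is a field whose characteristic divides the prime $q$, hence equals $q$; thus $q=p\in\Pi_{\cS}$, which is exactly \Cref{eq:addp}. If $\mathrm{char}(\bA)=0$, I would instead invoke \Cref{pr:addiv}: since $\fp\neq 0$ we have $p=\mathrm{char}(k)\in\mathrm{lchar}(\bA)$, and $p\in\Pi_{\cS}$ gives $\sup_{s\in\cS}\nu_p(s)=\infty$, so \Cref{eq:sumnu} holds; by \Cref{pr:addiv} this is equivalent to \Cref{eq:add0}. (Concretely, an $\cS$-divisible $x\in\bA$ would lie in $s\bA\subseteq p^{\nu_p(s)}\bA\subseteq\fp^{\nu_p(s)}$ for all $s\in\cS$, since $p\in\fp$, hence in $\bigcap_n\fp^n=0$.)

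The real content is the ``multiplicative'' condition of \Cref{item:mult}, that $(\bA^{\times}/\mathrm{torsion}(\bA^{\times}),\cdot)$ has no non-trivial $\cS$-divisible element; here I would pass to a completion. Localizing and completing $\bA$ at $\fp$ gives a complete discrete valuation ring $\widehat{\bA_{\fp}}$ with the same finite residue field $k$, so its fraction field is a non-archimedean local field — a finite extension of $\bQ_p$ when $\mathrm{char}(\bA)=0$ and isomorphic to $k((t))$ when $\mathrm{char}(\bA)=p$. By the structure of units recalled in \Cref{ex:loc0} and \Cref{ex:locp}, $\widehat{\bA_{\fp}}^{\times}\cong F\times W$ with $F$ finite and $W$ a free $\bZ_p$-module (finite rank in the mixed-characteristic case, possibly of infinite rank in the equal-characteristic case); in particular $W$ is torsion-free with $\bigcap_n p^nW=0$. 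The inclusions $\bA^{\times}\hookrightarrow\bA_{\fp}^{\times}\hookrightarrow\widehat{\bA_{\fp}}^{\times}$ followed by the quotient $\widehat{\bA_{\fp}}^{\times}\twoheadrightarrow\widehat{\bA_{\fp}}^{\times}/\mathrm{torsion}\cong W$ compose to a homomorphism $\phi\colon\bA^{\times}\to W$; its kernel is exactly $\mathrm{torsion}(\bA^{\times})$, since $\phi(u)=0$ says precisely that $u^n=1$ in $\widehat{\bA_{\fp}}$ for some $n$, an identity that (as $\bA\hookrightarrow\widehat{\bA_{\fp}}$) already holds in $\bA$. Hence $\phi$ induces an injection $\bar\phi\colon\bA^{\times}/\mathrm{torsion}(\bA^{\times})\hookrightarrow W$, which sends $\cS$-divisible elements to $\cS$-divisible elements. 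Because $W$ is a $\bZ_p$-module, $sW=p^{\nu_p(s)}W$, so any $\cS$-divisible element of $W$ lies in $\bigcap_{s\in\cS}p^{\nu_p(s)}W=\bigcap_n p^nW=0$ (using $p\in\Pi_{\cS}$). Therefore $\bar\phi$ kills every $\cS$-divisible element, and by injectivity there is none but the trivial one — which is the remaining hypothesis of \Cref{item:mult}.

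The only step I expect to require care is the last one: one must be sure that completing at $\fp$ does not identify distinct classes of non-torsion units (this is what the kernel computation for $\phi$ guarantees, the point being that a torsion relation $u^n=1$ witnessed in $\widehat{\bA_{\fp}}$ is already witnessed in $\bA$), and that the unit group of the completed local field genuinely splits off a $p$-adically separated torsion-free piece $W$ uniformly across the mixed- and equal-characteristic cases — which is exactly the content imported from \Cref{ex:loc0} and \Cref{ex:locp}. Everything else is unwinding \Cref{def:div} and quoting \Cref{pr:addiv}.
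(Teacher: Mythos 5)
Your proof is correct and follows essentially the same route as the paper's: \Cref{pr:addiv} for the additive conditions, and the embedding of $\bA^{\times}$ into the unit group of the $\fp$-adic completion (a local-field unit group with the structure recalled in \Cref{ex:loc0,ex:locp}) for the multiplicative one. You merely make explicit the details the paper leaves implicit, notably the kernel computation showing that torsion witnessed in $\widehat{\bA}_{\fp}$ is already witnessed in $\bA$ and the separatedness $\bigcap_n p^nW=0$ of the torsion-free part.
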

\begin{proof}
  The positive-characteristic branch \Cref{eq:addp} is obvious, \Cref{eq:add0} holds also by \Cref{pr:addiv}, and the hypothesis of \Cref{th:gla} \Cref{item:mult} (the fact that the $\cS$-divisible elements of $\bA^{\times}$ are roots of unity) follows from the corresponding claim for integer rings of local fields (\Cref{ex:loc0,ex:locp}) and the embedding \cite[Remark (1) following Theorem 10.17]{am_comm}
  \begin{equation*}
    \bA
    \lhook\joinrel\xrightarrow{\quad}
    \text{localization }\bA_{\fp}
    \lhook\joinrel\xrightarrow{\quad}
    \text{$\fp$-adic completion }\widehat{\bA}_{\fp}:=\varprojlim_n \bA/\fp^n 
  \end{equation*}
  for some prime ideal $\fp\trianglelefteq \bA$ with finite characteristic-$p$ residue field $\bA/\fp$. 
\end{proof}

%%%%%%%%%%%%%%%%%%%%%%%%%%%%%%%%%%%%%%%%%%%%%%%%%%%%%%%%%%%%%%%%%%%%%%%%%%%%%
%%%%%%%%%%%%%%%%%%%%%%%%%%%%%%%%%%%%%%%%%%%%%%%%%%%%%%%%%%%%%%%%%%%%%%%%%%%%%

\addcontentsline{toc}{section}{References}
%\bibliography{bib}{}
%\bibliographystyle{plain}

\Addresses

\end{document}